\def\newaliasedtheorem#1[#2]#3{
	\newaliascnt{#1@alt}{#2}
	\newtheorem{#1}[#1@alt]{#3}
	\expandafter\newcommand\csname #1@altname\endcsname{#3}
}
\numberwithin{equation}{section}
\newtheoremstyle{slanted}{\topsep}{\topsep}{\slshape}{}{\bfseries}{.}{.5em}{}
\theoremstyle{plain}
\newtheorem{theorem}{Theorem}[section]
\theoremstyle{definition}
\theoremstyle{remark}
\newcommand{\setR}{\mathbb{R}}
\newcommand{\TT}{\mathbb{T}}
\newcommand{\eps}{\varepsilon}
\let\altphi\phi
\let\phi\varphi
\let\varphi\altphi
\let\altphi\undefined
\newcommand{\norm}[1]{\left\lVert#1\right\rVert}
\newcommand{\di}{\mathop{}\!\mathrm{d}}
\newcommand{\loc}{{\rm loc}}
\newcommand{\leb}{\mathscr{L}}
\newcommand{\Prob}{\mathscr{P}}
\newfont{\tmpf}{cmsy10 scaled 2500}
\def\Xint#1{\mathchoice
	{\XXint\displaystyle\textstyle{#1}}%
	{\XXint\textstyle\scriptstyle{#1}}%
	{\XXint\scriptstyle\scriptscriptstyle{#1}}%
	{\XXint\scriptscriptstyle\scriptscriptstyle{#1}}%
	\!\int}
\def\XXint#1#2#3{{\setbox0=\hbox{$#1{#2#3}{\int}$ }
		\vcenter{\hbox{$#2#3$ }}\kern-.6\wd0}}
\def\dashint{\Xint-}
\begin{document}
	
	\title{Advection diffusion equations with Sobolev velocity field}
	\author{Elia Bru\'e}  \thanks{Scuola Normale Superiore, Piazza dei Cavalieri 7, 56126 Pisa, Italy, Email address: \url{elia.brue@sns.it, }}
	
	\author{Quoc-Hung Nguyen} \thanks{ShanghaiTech University,
		393 Middle Huaxia Road, Pudong,
		Shanghai, 201210, China, Email address: \url{qhnguyen@shanghaitech.edu.cn. }} 

	\maketitle
	
%

	\begin{abstract}
		
		In this note we study advection diffusion equations associated to incompressible $W^{1,p}$ velocity fields with $p>2$.  We present new estimates on the energy dissipation rate and we discuss applications to the study of upper bounds on the enhancing dissipation rate, lower bounds on the $L^2$ norm of the density, and quantitative vanishing viscosity estimates. The key tools employed in our argument are a propagation of regularity result, coming from the study of transport equations, and a new result connecting the energy dissipation rate to regularity estimates for transport equations.  Eventually we provide examples which underline the sharpness of our estimates.

	    \medskip
		
		\textit{Key words}: Advection diffusion equation with Sobolev vector field; transport equation; log-Sobolev space; Bressan's mixing conjecture.
		\medskip\\
		\textit{MSC} (2010): 34A12,35F25,35F10
	\end{abstract}

\tableofcontents

\section*{Introduction and main result}\label{sec: Introduction and main results}
Let $\TT^d$ be the torus of dimension $d\ge 2$ and $T\in (0,+\infty]$. Given a divergence free velocity field $b\in L^1([0,T],W^{1,p}(\TT^d,\setR^d))$, for $p>1$, and an initial datum $u_0\in L^{\infty}(\TT^d)$ we study the Cauchy problem associated to the advection-diffusion equation 
\begin{equation}\label{CE}
  \begin{dcases}
  \partial_t u^\nu + b\cdot \nabla u^{\nu} - \nu \Delta u^{\nu} = 0
  \quad \text{on $\TT^d\times (0,T]$ }
  \\
  u^{\nu}(0,x)=u_0(x),
  \end{dcases}\tag{$E_{\nu}$}
\end{equation}
and the linear transport equation
\begin{equation}\label{Tr}
\begin{dcases}
\partial_t u^0 + b\cdot \nabla u^{0} = 0
\quad \text{on $\TT^d \times (0,T]$ }
\\
u^0(0,x)=u_0(x).
\end{dcases}\tag{$E_{0}$}
\end{equation}
Above, $\nu>0$ is a constant molecular diffusivity.
In order to ease notation we often write $u^\nu_t(x)$ and $b_t(x)$ in place of, respectively, $u^{\nu}(t,x)$ and $b(t,x)$. In this note solutions to \eqref{CE} and \eqref{Tr} are understood in the distributional sense, are mean free, and belong to the natural classes
\begin{equation}\label{eq:CEclass}
	u^{\nu}\in L^{\infty}([0,T]\times \TT^d)\cap C([0,T],L^2(\TT^d))\cap  L^1([0,T],W^{1,2}(\TT^d)),
\end{equation}
and $u^0\in C([0,T],(L^{\infty}(\TT^d),w^*))$, where $(L^{\infty}(\TT^d),w^*)$ denotes the space of bounded functions endowed with the weak-star topology.

Existence and uniqueness of solutions to \eqref{Tr} are guaranteed by the DiPerna-Lions theory \cite{lions,Ambrosio04} (see also \cite{AmbrosioCrippa14}). Regarding the advection diffusion equation, standard energy estimates ensure that \eqref{CE} posses a unique solution in \eqref{eq:CEclass} which satisfies the energy balance
\begin{equation}\label{eq:energybalance}
	\norm{u^{\nu}_t}_{L^2}^2-
	\norm{u_0}_{L^2}^2
	=-2\nu \int_0^t\norm{\nabla u_s^{\nu}}_{L^2}^2\di s
	\quad \text{for every $t\in [0,T]$.}
\end{equation}
Motivated by recent developments in the mathematical understanding of the \textit{dissipation enhancement by mixing} \cite{ConstantinKiselevRyzhikZlatos08,Bedrossian-CotiZelati17,CotiZelatiDelgadinoElgindi,FengIyer19,DrivasElgindiIyerJeong19,CotiZelatiDolce19}, in this note we study quantitative properties of solutions to \eqref{CE} at low regularity, i.e. in the setting of Sobolev divergence free velocity fields. 
This framework is quite natural in view of possible applications to problems coming from fluid dynamics and conservation laws, where very often the setting of smooth vector fields is too restrictive.

For transport problems, a theory in weaker regularity settings has been developed in the last decades and it is nowadays clear that nonuniqueness results \cite{ModenaSzekelyhidi18,ModenaSzekelyhidi19,ModenaSattig19,BrueColomboDeLellis20} and new loss of regularity phenomena \cite{AlbertiCrippaMazzuccato14,AlbertiCrippaMazzuccato16,AlbertiCrippaMazzuccato18,Jabin,BrueNguyen18c} may occur. These phenomena affect also advection diffusion problems leading to challenging open questions.

\subsection*{Enhanced dissipation and mixing}
The enhanced dissipation is the notion that solutions to \eqref{CE} dissipate the energy $\norm{u^{\nu}_t}_{L^2}$ faster than $e^{-\nu t}$, the rate at which the heat equation dissipates energy. More rigorously, we give the following definition (Cf. \cite[Definition  1]{CotiZelatiDrivas19}).

\begin{definition}\label{def:enhancingdissipation}
	Let $r:(0,\nu_0)\to (0,1)$ be an increasing function   satisfying 
	\[
	\lim_{\nu\to 0}\frac{\nu}{r(\nu)}=0.
	\]
	We say that a divergence free vector field $b$ is diffusion enhancing on a subspace $H\subset L^2(\TT^d)$, of rate $r(\nu)$, if for any $\nu\in (0,\nu_0)$ there exists $t_{\nu}>0$ such that 
	\begin{equation}
			\norm{u^{\nu}_t}_{L^2}^2\le C e^{- r(\nu) t}\norm{u_0}_{L^2}^2
			\quad\text{for every $t\ge t_\nu$, and $u_0\in H$.}
	\end{equation}
	The constant $C>0$ above depends only on $b$.
\end{definition}

It is nowadays well known that mixing in the diffusion free case is a responsible of diffusion enhancing \cite{ConstantinKiselevRyzhikZlatos08,CotiZelatiDelgadinoElgindi,FengIyer19}. 

\begin{definition}\label{def:mixing}
	Let $\rho: (0,\infty)\to [0, \infty)$ be a decreasing function satisfying $\lim_{t\to +\infty}\rho (t)=0$.
	We say that a time dependent divergence free velocity field $b$ on $\TT^d$ mixes with rate $\rho$ if for any $t_0>0$, and $u_{t_0}\in W^{1,2}$, with $\int u_{t_0}\di x=0$, denoting by $u:[t_0, \infty)\to \setR$ the solution to \eqref{CE} starting from $u_{t_0}$ at time $t=t_0$, one has
	\begin{equation*}
	   \norm{u_t}_{H^{-1}} \le \rho (t-t_0) \norm{u_{t_0}}_{W^{1,2}}
	   \quad \text{for any $t\ge t_0$.}
	\end{equation*}
\end{definition}

In \cite{CotiZelatiDelgadinoElgindi,FengIyer19} it has been estimated
the diffusion enhancing rate $r(\nu)$ in terms of the mixing rate $\rho(t)$, when the drift is Lipschitz regular uniformly in time, i.e. $b\in L^{\infty}_t W^{1,\infty}_x$.

Let us recall that, for smooth velocity fields, a simple Gronwall argument gives
\begin{equation}\label{eq:Bressanpinfty}
		\norm{u_t}_{H^{-1}} \ge e^{- t \norm{\nabla b}_{L^{\infty}}} \frac{\norm{u_0}_{\dot L^2}^2}{\norm{u_0}_{\dot H^1}}
	\quad \text{for all $t\ge 0$ and $u_0\in L^2(\TT^d)$}
\end{equation}
ensuring that the mixing rate cannot be faster than exponential.
In this meaningful case, i.e.  $\rho(t):=M e^{-\mu t}$ for some constants $M>0$ and $\mu>0$, the diffusion enhancing rate obtained in \cite[Theorem 2.5]{CotiZelatiDelgadinoElgindi} is
\begin{equation}\label{eq:diffusionenhancingratelipschitz}
	r(\nu)= C\log(1/\nu)^{-2}
	\quad
	\text{with $C=C(M, \norm{\nabla b}_{L^{\infty}})$.}
\end{equation}

As far as we know it is not known whether a velocity fields having a diffusion enhancing rate slower than $r(\nu) = O( \log(1/\nu)^{-2})$ exists. However, relying on an old result by Poon \cite{Poon96,MilesDoering18} 
\begin{equation}\label{eq:Poon}
\norm{u^{\nu}_t}_{L^2}^2\ge \norm{u_0}_{L^2}^2
\exp\left\lbrace  -\nu \frac{\norm{\nabla u_0}_{L^2}^2}{\norm{u_0}_{L^2}^2} \int_0^t 
\exp\left\lbrace 2\int_0^s \norm{\nabla b_r}_{L^{\infty}}\di r\right\rbrace
\di s\right\rbrace
\quad \forall\, t>0\ \ \nu\in (0,1)
\end{equation}
it is straightforward to see that $r(\nu)\le O( \log(1/\nu)^{-1})$ regardless of the mixing rate. 

Let us mention a very recent result of Coti Zelati and Drivas \cite{CotiZelatiDrivas19} where sharp upper bounds on the diffusion rate have been provided for a class of meaningful examples, such as shear flows and circular flows.

Out of the smooth setting it is even unknown whether a double exponential lower bound on the $L^2$ norm, as in \eqref{eq:Poon}, holds. The main difficulty here is that energy methods are not suitable to attack the problem due to a possible loss of regularity for transport equations \cite{AlbertiCrippaMazzuccato14,AlbertiCrippaMazzuccato16,AlbertiCrippaMazzuccato18,Jabin,BrueNguyen18c,BrueNguyen19}.
We refer to \cite[Section 1.3]{DrivasElgindiIyerJeong19} for a discussion on this topic.

\subsubsection*{Bressan's mixing conjecture}
Let us finally mention that in the non smooth setting is still unknown whether the mixing rate for passive scalars has a universal lower bound. This is related to the famous Bressan's mixing conjecture \cite{Bressan03} that can be formulated as follows.
\begin{conjecture}
	Given a divergence free velocity field $b\in L^{\infty}([0,\infty), W^{1,1}(\TT^d,\setR^d))$ there exist $c>0$ and $C>0$ depending only on the initial datum $u_0$ such that 
	\begin{equation*}
		\rho(t) \ge C \exp\left\lbrace - c t \norm{\nabla b}_{L^{\infty}_t L^1_x} \right\rbrace
		\quad 
		\text{for every $t\ge 0$.}
	\end{equation*}
	Where $\rho$ is the mixing rate according to \autoref{def:mixing}.
\end{conjecture}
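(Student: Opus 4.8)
The plan is to produce, for a fixed mean-zero datum $u_{t_0}=u_0\in W^{1,2}$, a lower bound on $\norm{u_t}_{H^{-1}}$ for $u$ solving the (inviscid) transport equation \eqref{Tr}, since by \autoref{def:mixing} any admissible rate satisfies $\rho(t-t_0)\ge \norm{u_t}_{H^{-1}}/\norm{u_{t_0}}_{W^{1,2}}$. There are two natural routes. The elementary \emph{Eulerian} one starts from the interpolation $\norm{u_t}_{L^2}^2\le \norm{u_t}_{\dot H^1}\norm{u_t}_{H^{-1}}$ and the conservation $\norm{u_t}_{L^2}=\norm{u_{t_0}}_{L^2}$ along the incompressible flow, which give
\begin{equation*}
	\norm{u_t}_{H^{-1}}\ge \frac{\norm{u_{t_0}}_{L^2}^2}{\norm{u_t}_{\dot H^1}},
\end{equation*}
and thereby reduce matters to an upper bound on $\norm{u_t}_{\dot H^1}$. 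Differentiating $\partial_t\nabla u+b\cdot\nabla(\nabla u)=-(\nabla b)^{\mathsf T}\nabla u$ yields, at least formally,
\begin{equation*}
	\frac{\di}{\di t}\norm{\nabla u_t}_{L^2}^2\le 2\int_{\TT^d}\abs{\nabla b_t}\,\abs{\nabla u_t}^2\,\di x,
\end{equation*}
so that for $b\in L^\infty_tW^{1,\infty}_x$ Gr\"onwall's inequality gives $\norm{u_t}_{\dot H^1}\le e^{(t-t_0)\norm{\nabla b}_{L^\infty}}\norm{u_{t_0}}_{\dot H^1}$, hence the exponential lower bound on $\rho$ with $C=\norm{u_0}_{L^2}^2/\norm{u_0}_{W^{1,2}}^2$; this reproduces \eqref{eq:Bressanpinfty}. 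The trouble is that the route is intrinsically tied to $\norm{\nabla b}_{L^\infty}$: at $p=1$ the right-hand side above cannot be closed against $\norm{\nabla b}_{L^1}$, and indeed $\norm{u_t}_{\dot H^1}$ need not grow merely exponentially in $\norm{\nabla b}_{L^1_tL^1_x}$.

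To reach the $L^1$ scaling demanded by the conjecture I would instead follow the \emph{Lagrangian} philosophy of Crippa--De Lellis, which controls the functional mixing through the stretching of interparticle distances rather than through the $\dot H^1$ norm. Passing to the regular Lagrangian flow $X_t$ of $b$, set
\begin{equation*}
	\Phi(t):=\int_{\TT^d}\int_{\TT^d}\log\left(1+\frac{\abs{X_t(x)-X_t(y)}}{\abs{x-y}}\right)\di x\,\di y.
\end{equation*}
Differentiating along trajectories and using the pointwise bound $\abs{b_t(X_t(x))-b_t(X_t(y))}\le \abs{X_t(x)-X_t(y)}\bigl(M\abs{\nabla b_t}(X_t(x))+M\abs{\nabla b_t}(X_t(y))\bigr)$ in terms of the Hardy--Littlewood maximal operator $M$, together with the measure-preservation of $X_t$, one obtains
\begin{equation*}
	\Phi'(t)\le C\int_{\TT^d}M\abs{\nabla b_t}\,\di x\le C\norm{\nabla b_t}_{L^p}\qquad (p>1),
\end{equation*}
and hence $\Phi(t)\le Ct\,\norm{\nabla b}_{L^\infty_tL^p_x}$. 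This bounds the geometric mixing scale from below by an exponentially small quantity, and combined with the standard inequalities relating the mixing scale to the $H^{-1}$ norm it would deliver the claimed lower bound on $\rho$.

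The hard part — indeed the entire content of the conjecture — is the endpoint $p=1$, where both routes collapse. The Eulerian route needs $L^\infty$; the Lagrangian route needs $p>1$ precisely because the maximal operator $M$ is bounded on $L^p$ only for $p>1$ and is merely of weak type $(1,1)$, so the key estimate $\int_{\TT^d}M\abs{\nabla b}\,\di x\le C\norm{\nabla b}_{L^1}$ fails. Recovering this missing logarithm is exactly what one cannot do with soft harmonic analysis, and it is why the $p>1$ version is a theorem while the $W^{1,1}$ statement of Bressan remains open. A successful attack would presumably have to exploit the divergence-free structure of $b$ to restore the endpoint, or replace the maximal-function device by an entropy/stochastic control of the filamentation rate that survives at $L^1$; I would regard the construction of such an $L^1$-robust substitute as the main obstacle.
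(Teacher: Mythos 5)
This statement is Bressan's mixing conjecture, which the paper presents precisely as an \emph{open problem}: it records that the Lipschitz case follows from the Gr\"onwall bound \eqref{eq:Bressanpinfty}, that the case $b\in L^{\infty}_t W^{1,p}_x$ with $p>1$ was settled in \cite{CrippaDeLellis08}, and that ``the case $p=1$ seems to require new ideas.'' There is therefore no proof in the paper against which your proposal can be compared, and no proof is expected of you; the right standard here is whether your assessment of the state of the art is accurate.

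On that standard your write-up is essentially correct. You correctly identify the two known routes and, more importantly, the exact point where each one dies at the endpoint: the Eulerian interpolation $\norm{u_t}_{L^2}^2\le\norm{u_t}_{\dot H^1}\norm{u_t}_{H^{-1}}$ plus Gr\"onwall is intrinsically an $L^\infty_x$ argument, while the Crippa--De Lellis Lagrangian functional requires the strong $L^p$ bound for the Hardy--Littlewood maximal operator, which is available only for $p>1$ (the inequality $\int_{\TT^d} M\abs{\nabla b}\,\di x\lesssim\norm{\nabla b}_{L^1}$ is false, $M$ being merely of weak type $(1,1)$). This is the same diagnosis the paper and the literature give. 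Two minor caveats: first, your final step for $p>1$ (``combined with the standard inequalities relating the mixing scale to the $H^{-1}$ norm'') compresses a genuinely nontrivial passage from the Lagrangian/geometric control to the functional $H^{-1}$ mixing scale of \autoref{def:mixing}, which deserves a citation rather than the word ``standard''; second, your closing suggestion that the divergence-free structure or a stochastic substitute for $M$ might rescue $p=1$ is reasonable speculation but, as you yourself say, it is exactly the missing idea. In short: you have not proved the statement, but nobody has, and your account of why is faithful to the paper's framing.
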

We have already pointed out (see \eqref{eq:Bressanpinfty}) that the conjecture follows from a standard Gronwall estimate when the velocity field is Lipschitz, uniformly in time. A positive result to the conjecture has been obtained also for $b\in L^{\infty}([0,\infty), W^{1,p}(\TT^d,\setR^d))$ with $p>1$ in the very influential work \cite{CrippaDeLellis08}, while the case $p=1$ seems to require new ideas.

In view of the enhanced dissipation estimates, the problems of finding lower bounds on the energy $\norm{u_t^{\nu}}_{L^2}^2$ and on the diffusion enhancing rate $r(\nu)$ have natural connections with the challenging Bressan's mixing conjecture \cite{Bressan03}. 
We refer to \cite[section 1,3]{DrivasElgindiIyerJeong19} for a detailed discussion.

\subsection*{Energy dissipation rate in the Sobolev setting}
Aiming at better understanding enhanced dissipation and energy's lower bounds, the key quantity to study is the energy dissipation rate
\begin{equation*}
	2 \nu \int_0^t \norm{\nabla u_s^{\nu}}_{L^2}^2 \di s
	= \norm{u_0}_{L^2}^2 - \norm{u_t^{\nu}}_{L^2}^2.
\end{equation*}
Notice that, when the divergence free velocity field $b$ has the property that \eqref{Tr} admits a unique solution that conserves the $L^2$ norm, it must hold
\begin{equation}\label{zzzzz1}
	\lim_{\nu\downarrow 0}  2\nu \int_0^t \norm{\nabla u_s^{\nu}}_{L^2}^2 \di s =0.
\end{equation}
It can be easily checked by observing that, up to extracting a subsequence, $u_t^\nu \to u^0_t$ weak in $L^2$ and by using the fact that the $L^2$ norm is lower semicontinuous with respect to weak convergence.

In particular, if the drift is either Sobolev or $BV$ the DiPerna-Lions-Ambrosio theory \cite{lions,Ambrosio04,AmbrosioCrippa14} guarantees \eqref{zzzzz1} (see also the recent paper \cite{Nguyen18-1} for a quantitative analysis in $BV$ and the study  of velocity fields which can be represented as singular integral of functions in $BV$). One of the main achievement of this work is the correct estimate of the rate of convergence of \eqref{zzzzz1}. Before stating the result and its consequences let us recall that, in view of \eqref{eq:Poon} it is easily seen that in the Lipschitz setting (i.e. $b\in L^{\infty}_tW^{1,\infty}_x$) any solution to \eqref{CE} with $u_0\in W^{1,2}$ satisfies
\begin{equation}\label{eq:fastdissipation}
	\nu \int_0^1 \norm{\nabla u^{\nu}_s}_{L^2}^2\di s \le C \nu
	\quad\text{for $\nu\in (0,1)$}.
\end{equation}
Hence the energy dissipation rate is $O(\nu)$ for $\nu \to 0$. On the opposite side, if one relaxes the regularity assumption on the velocity field the situation may change dramatically. For instance, in \cite{DrivasElgindiIyerJeong19} it has been recently built a divergence free vector field 
\[
b\in C^{\infty}([0,1)\times \TT^d)\cap L^1([0,1], C^{\alpha}(\TT^d))\cap L^{\infty}([0,1]\times \TT^d)
\]
such that
\[
\limsup_{\nu\downarrow 0}\,   \nu \int_0^1 \norm{\nabla u^{\nu}_s}_{L^2}^2\di s \ge c>0,
\]
for a broad family of initial data $u_0\in W^{2,2}(\TT^d)$.
Notice that this implies the existence of passive scalars advected by $b$ with non constant $L^2$ norm.

In the Sobolev setting we have the following logarithmic rate.

\begin{theorem}\label{prop: dissipation estimate}
	Let $b\in L^{\infty}([0,T],W^{1,p}(\TT^d,\setR^d))$ be a divergence free vector field for some $p>2$.
	Any solution $u^{\nu}$ to \eqref{CE} with $u_0\in W^{1,2}(\TT^d)\cap L^{\infty}$ satisfies
	\begin{equation}\label{eq:dissipation}
	\nu\int_0^t \norm{\nabla u_s^{\nu}}^2_{L^2}\di s \le C(\norm{u_0}_{W^{1,2}}^2 +\norm{u_0}_{L^{\infty}}^2)
	\left[
	\nu t
	+\frac{t^p\norm{\nabla b}_{L^{\infty}_tL^p_x}^p+1}{\log\left( \frac{1}{\nu t}+2 \right)^{p-1}}
	\right]
	\quad \forall \, t\ge 0,
	\end{equation}
	where $C=C(p,d)$.
	In particular, for any $t>0$, we have
	\begin{equation}\label{zzzzz2}
	\nu\int_0^t \norm{\nabla u_s^{\nu}}^2_{L^2}\di s
	\le
	C
	\log(1/\nu)^{-p+1}
	\quad
	\text{for every $\nu\in (0,1/5)$}.
	\end{equation} 
	Here $C=C(p,d,t, \norm{u_0}_{W^{1,2}}^2 +\norm{u_0}_{L^{\infty}}^2 ,\norm{\nabla b}_{L^{\infty}_tL^p_x})>0$.
\end{theorem}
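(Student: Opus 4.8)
The starting point is the energy balance \eqref{eq:energybalance}, which reduces \eqref{eq:dissipation} to a quantitative bound on the loss $\norm{u_0}_{L^2}^2-\norm{u^\nu_t}_{L^2}^2=2\nu\int_0^t\norm{\nabla u^\nu_s}_{L^2}^2\di s$. The plan is to read this loss off a Lagrangian representation. Writing $u^\nu_t(x)=\mathbb{E}[u_0(X^x_t)]$, where $X^x$ is the stochastic flow of $\di X_s=\pm b_s(X_s)\di s+\sqrt{2\nu}\di B_s$ started at $x$ (measure preserving, since $b$ is divergence free and the noise is additive), and introducing an independent copy $\tilde X^x$ driven by an independent Brownian motion, a direct computation (expanding the square and using the measure preservation of the flow) gives the exact identity
\begin{equation*}
	2\nu\int_0^t\norm{\nabla u^\nu_s}_{L^2}^2\di s=\tfrac12\int_{\TT^d}\mathbb{E}\big[(u_0(X^x_t)-u_0(\tilde X^x_t))^2\big]\di x .
\end{equation*}
This is the incarnation, in the present setting, of the announced link between the dissipation rate and regularity estimates for transport: the right-hand side is small exactly when two independent noisy trajectories issued from the same point stay close on the scale at which $u_0$ oscillates.

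The second step is a quantitative regularity estimate for the stochastic flow in the spirit of Crippa--De Lellis. Setting $D_s=X^x_s-\tilde X^x_s$ (so $D_0=0$) and, for a scale $\delta>0$ to be chosen, considering the functional $G(s)=\int_{\TT^d}\mathbb{E}[\log(1+\abs{D_s}/\delta)^{p}]\di x$, I would apply It\^o's formula. The pointwise bound $\abs{b_s(y)-b_s(z)}\le C\abs{y-z}\,(M\nabla b_s(y)+M\nabla b_s(z))$, with $M$ the maximal operator, controls the drift part of $\tfrac{\di}{\di s}G$, after H\"older's inequality and the measure preservation, by $C\norm{\nabla b_s}_{L^p}\,G(s)^{1/p'}$ (the choice of exponent $p$ in the log-functional is dictated by this pairing, and the boundedness $M\colon L^p\to L^p$ for $p>1$ is used here); the martingale part has zero mean. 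The It\^o correction produces the diffusive separation scale, and this is the key new term: with $\delta\simeq\sqrt{\nu t}$ its contribution is finite, and it is precisely its control near the coincidence set $\{D_s=0\}$, where the integrand behaves like $\abs{D_s}^{p-2}\delta^{-p}$, that forces the restriction $p>2$. Integrating, and using that $G$ is nondecreasing to close the resulting inequality, yields $G(t)\le C\,(1+t^p\norm{\nabla b}_{L^\infty_tL^p_x}^{p})$, which explains the numerator in \eqref{eq:dissipation}.

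The third step estimates the variance by splitting at a threshold $\rho>\delta$. On $\{\abs{D_t}\le\rho\}$ the Haj\l asz inequality $\abs{u_0(y)-u_0(z)}\le C\abs{y-z}(M\nabla u_0(y)+M\nabla u_0(z))$ together with $M\colon L^2\to L^2$ (this is where $u_0\in W^{1,2}$ enters) bounds the contribution by $C\rho^2\norm{\nabla u_0}_{L^2}^2$. On $\{\abs{D_t}>\rho\}$ the crude bound $(u_0(X_t)-u_0(\tilde X_t))^2\le 4\norm{u_0}_{L^\infty}^2$ combined with the Chebyshev-type inequality $\mathbf 1_{\{\abs{D_t}>\rho\}}\le\log(1+\abs{D_t}/\delta)^{p}/\log(1+\rho/\delta)^{p}$ and the bound on $G$ controls it by $C\norm{u_0}_{L^\infty}^2(1+t^p\norm{\nabla b}_{L^p}^p)\log(1+\rho/\delta)^{-p}$. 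Choosing $\delta\simeq\sqrt{\nu t}$ turns $\log(1+\rho/\delta)$ into a multiple of $\log(1/(\nu t)+2)$, and optimising $\rho$ produces the logarithmic gain; collecting the $\nu t$ error from the truncation and reorganising constants gives \eqref{eq:dissipation}. Then \eqref{zzzzz2} follows at once by fixing $t$, noting that $\nu t<1$ for $\nu\in(0,1/5)$ so that $\log(1/(\nu t)+2)\ge c\log(1/\nu)$, and absorbing the lower-order terms into the constant.

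The main obstacle is the exact power of the logarithm. The transport bookkeeping of the second and third steps naturally supplies a power governed by $p$, and it is the precise accounting of the diffusive It\^o correction — which both forces $p>2$ and dictates how much of the flow regularity survives the passage to the dissipation rate through the choice $\delta\simeq\sqrt{\nu t}$ — that determines the exponent $p-1$ in \eqref{eq:dissipation}; getting this bookkeeping right, rather than the structure of the argument, is the delicate point. A secondary difficulty is to make the Lagrangian identity of the first step and the measure preservation of the stochastic flow rigorous at the regularity $b\in L^\infty_tW^{1,p}_x$, which should be handled through the DiPerna--Lions--Ambrosio well-posedness theory for the associated regular Lagrangian flow rather than through classical flow estimates.
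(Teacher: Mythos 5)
Your proposal is correct in substance, but it follows a genuinely different route from the paper. The paper never writes the dissipation as a two-point correlation: it proves \eqref{eq:dissipation} on the Eulerian side, combining (i) the $\nu$-uniform propagation of $H^{\log,p}$ regularity (\autoref{cor:regularity}), (ii) the a priori bound on $\nu^2\int_0^t\norm{\Delta u^\nu_s}_{L^2}^2\di s$ obtained by testing the equation with $\Delta u^\nu$ and Gagliardo--Nirenberg interpolation (\autoref{prop: laplacianbound}), and (iii) a Fourier splitting of the dissipation at a frequency $\lambda$, closed by Young's inequality and optimized at $\lambda\sim(\nu t)^{-1/2}\log(1/(\nu t)+e)^{1/2}$. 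You instead work entirely on the Lagrangian side: the identity $2\nu\int_0^t\norm{\nabla u^\nu_s}_{L^2}^2\di s=\tfrac12\int\mathbb{E}\big[(u_0(X^x_t)-u_0(\tilde X^x_t))^2\big]\di x$ (the device of Coti Zelati--Drivas \cite{CotiZelatiDrivas19}, which the paper cites but does not use in this proof), then an It\^o-corrected Crippa--De Lellis estimate for $G(s)=\int\mathbb{E}[\log(1+\abs{D_s}/\delta)^{p}]\di x$, then a two-scale splitting of the variance. Your individual steps do check out: the identity follows from \eqref{eq:stochastic lagrangian representation}, measure preservation, independence of the two copies, and \eqref{eq:energybalance}; for $p>2$ the map $z\mapsto\log(1+\abs z/\delta)^p$ is $C^2$ with Laplacian bounded by $C_{p,d}\,\delta^{-2}$, so with $\delta=\sqrt{\nu t}$ the It\^o term contributes only $O(1)$ to $G(t)$; the drift term closes via the Lusin--Lipschitz inequality, H\"older and measure preservation exactly as in \autoref{prop:Lagrangianregularityestimate}, and the comparison argument gives $G(t)\lesssim_{p,d}1+t^p\norm{\nabla b}_{L^\infty_tL^p_x}^p$. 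What each approach buys: the paper's route reuses machinery needed elsewhere (\autoref{thm:regularity result}, \autoref{prop: laplacianbound}) and needs no stochastic calculus beyond the representation formula, but its Fourier splitting intrinsically loses a power of the logarithm (at the optimal $\lambda$ the low-frequency term $\nu\lambda^2t/\log(\lambda+2)^p$ is of size $\log(1/(\nu t))^{1-p}$); your route invokes the $W^{1,2}$ regularity of $u_0$ only at a spatial scale $\rho$ that can be taken polynomially small in $\nu t$, so no such loss occurs.

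Two caveats. First, your attribution of the exponent $p-1$ to the It\^o bookkeeping is inaccurate --- and in your favor: with $\delta=\sqrt{\nu t}$ and, say, $\rho=(\nu t)^{1/4}$, your three steps give $2\nu\int_0^t\norm{\nabla u^\nu_s}_{L^2}^2\di s\lesssim_{p,d}(\norm{u_0}_{W^{1,2}}^2+\norm{u_0}_{L^{\infty}}^2)(1+t^p\norm{\nabla b}_{L^\infty_tL^p_x}^p)\log\big(\tfrac{1}{\nu t}+2\big)^{-p}$, i.e.\ exponent $p$, not $p-1$; the It\^o correction merely contributes the additive constant in the numerator. This implies \eqref{eq:dissipation} and \eqref{zzzzz2}, so it is not a gap, but it is strictly stronger than the theorem and coincides with the rate conjectured by the authors right after \autoref{th:solutionsslowdissipation} --- a claim of that magnitude obliges you to write the It\^o formula and the ODE comparison in full detail rather than as a sketch. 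Second, as you note yourself, at the regularity $b\in L^\infty_tW^{1,p}_x$ the flow, its measure preservation, Feynman--Kac and It\^o's formula should be justified by mollifying $b$, proving the estimate with constants depending only on $\norm{\nabla b}_{L^\infty_tL^p_x}$, $p$, $d$, and passing to the limit by lower semicontinuity of the dissipation; this is the same caveat that underlies the paper's own \autoref{prop:Lagrangianregularityestimate} and \eqref{eq:stochastic lagrangian representation}.
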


The next result shows that the logarithmic rate is ``almost'' sharp.

\begin{theorem}\label{th:solutionsslowdissipation}
	Let $d\ge 2$ and $p>2$ be fixed. There exist a divergence free velocity field $b\in L^{\infty}([0,1],W^{1,p}(\TT^d,\setR^d))$ and $u_0\in W^{1,2}(\TT^d)\cap L^{\infty}$ such that
	\begin{equation}\label{eq:slowdissipating}
		\limsup_{\nu\to 0}\,  \log(1/\nu)^r \, \nu \int_0^1 \norm{\nabla u^{\nu}_s}_{L^2}^2\di s=+\infty 
	\end{equation}
	for any $r> p \frac{(p-1)}{p-2}$. Here $u^{\nu}$ denotes the solution to \eqref{CE}.
\end{theorem}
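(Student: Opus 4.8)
The plan is to exhibit an explicit self-similar mixing flow, together with a suitable initial datum, and to show that the associated inviscid dynamics transfers a quantifiable amount of $L^2$ energy to arbitrarily small spatial scales; the diffusion then removes precisely this energy, and the whole point is to control \emph{how much} energy reaches the diffusive scale by time $1$. Concretely, I would build $b$ as a superposition over dyadic frequencies,
\[
b(t,x)=\sum_{n\ge 1}\mathbf 1_{I_n}(t)\,b_n(x),
\]
where each $b_n$ is a (rescaled copy of a fixed) divergence free cellular mixing field at spatial scale $2^{-n}$, supported on a set $\Omega_n$ of measure $\mu_n$, with strain rate $\sigma_n\sim\norm{\nabla b_n}_{L^\infty}$, and acting on a time interval $I_n$ of length $\tau_n$, the $I_n$ being consecutive and contained in $[0,1]$. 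Choosing $u_0$ of order one on $\Omega_1$, the stage $b_n$ refines the scalar from scale $2^{-(n-1)}$ to $2^{-n}$ during $I_n$, thereby driving the cascade.

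The parameters are dictated by the structural constraints. Incompressibility is built into each $b_n$; the requirement $b\in L^{\infty}([0,1],W^{1,p})$ forces $\sigma_n\mu_n^{1/p}\lesssim 1$, one refinement per stage forces $\sigma_n\tau_n\sim 1$, and the unit time budget forces $\sum_n\tau_n\lesssim 1$. Saturating the Sobolev bound ($\mu_n\sim\sigma_n^{-p}$) and optimising the single remaining degree of freedom leads to the choice $\sigma_n\sim n^{(p-1)/(p-2)}$, for which $\tau_n\sim\sigma_n^{-1}$ is summable exactly because $(p-1)/(p-2)>1$. For a fixed viscosity $\nu$ the diffusive length $\sqrt\nu$ then corresponds to the stage $n_\nu\sim\log(1/\nu)$, and the energy that the cascade has pushed below this scale by time $1$ is comparable to $\mu_{n_\nu}\sim\log(1/\nu)^{-p(p-1)/(p-2)}$. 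Since exactly this energy is absorbed by the diffusion, I expect $\nu\int_0^1\norm{\nabla u_s^{\nu}}_{L^2}^2\di s\gtrsim\log(1/\nu)^{-p(p-1)/(p-2)}$, at least along the subsequence $\nu_k$ with $\sqrt{\nu_k}=2^{-k}$. Testing against $\log(1/\nu)^r$ with $r>p\frac{p-1}{p-2}$ then gives $\log(1/\nu_k)^r\,\nu_k\int_0^1\norm{\nabla u^{\nu_k}_s}_{L^2}^2\di s\gtrsim\log(1/\nu_k)^{\,r-p(p-1)/(p-2)}\to\infty$, which is \eqref{eq:slowdissipating}.

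The difficulty is twofold. First, one must actually produce a genuine incompressible field in $L^{\infty}_tW^{1,p}_x$ realising the prescribed multiscale stretching: the cellular cascade has to be implemented so that the folding keeps the mixed scalar inside $\Omega_n$ without exceeding the $W^{1,p}$ budget, and it is precisely this realisability constraint that pins the exponent at $p(p-1)/(p-2)$ rather than the value $p-1$ appearing in the upper bound \eqref{eq:dissipation}; I would import this building block from the sharp loss-of-regularity examples for the transport equation. Second, and most delicate, is the rigorous lower bound on the dissipation: the heuristic identity asserting that the dissipated energy equals the high frequency content of the inviscid solution must be made quantitative by comparing $u^{\nu}$ with the inviscid cascade and estimating the error, while checking that the energy delivered to scale $\sqrt\nu$ is produced early enough for the diffusion to act on it within $[0,1]$. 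This timing of energy transfer against diffusive absorption is the main obstacle, and is exactly what accounts for the gap between the rate we achieve and the upper bound of \autoref{prop: dissipation estimate}.
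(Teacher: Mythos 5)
Your plan is not the paper's proof, and as written it has a genuine gap at its core. The paper never constructs a field and proves a lower bound on its dissipation directly; it argues by contradiction, combining two soft ingredients: (i) \autoref{thm:slowly dissipating solutions}, which shows that a bound $\nu\int_0^t\norm{\nabla u_s^\nu}_{L^2}^2\di s\le C\log(1/\nu)^{-q}$ forces the \emph{inviscid} solution to lie in $H^{\log,r}$ for all $r<q\frac{p-2}{p-1}$ (proved by comparing $u^\nu\ast\rho_\eps$ with $u^0$, using the interpolation bound of \autoref{prop: laplacianbound} with $\eps=\nu$, and the Fourier characterisation \eqref{eq:logSobolevvsFourier}); and (ii) the imported counterexample \autoref{prop:oldcounterexample}, a transport solution that fails to be in $H^{\log,q}$ for every $q>p$. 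If \eqref{eq:slowdissipating} failed for some $r>p\frac{p-1}{p-2}$, then (i) would place $u^0_t$ in some $H^{\log,r_1}$ with $r_1>p$, contradicting (ii). Your proposal instead demands a \emph{quantitative anomalous-dissipation lower bound} for an explicit cascade field: you must show that the energy the inviscid cascade pushes below scale $\sqrt\nu$ is actually destroyed by the viscous flow by time $1$, with error control sharp enough to see a signal that is only logarithmically small. You explicitly flag this viscous--inviscid comparison as ``the main obstacle'' and supply no argument for it; but this step is not a technicality, it is the entire content of the theorem in your approach. The only results of this type in the literature (e.g.\ \cite{DrivasElgindiIyerJeong19}) produce an $O(1)$ dissipation lower bound for fields that are smooth away from the final time, and already require substantial work; nothing in your outline explains how to obtain the precise rate $\log(1/\nu)^{-p(p-1)/(p-2)}$ for a $W^{1,p}$ field.

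There is also an internal inconsistency that signals the heuristic cannot be made rigorous as stated. The constraints you list (Sobolev budget $\sigma_n^p\mu_n\lesssim 1$, one refinement per stage $\sigma_n\tau_n\sim 1$, time budget $\sum_n\tau_n\lesssim 1$) do \emph{not} pin down $\sigma_n\sim n^{(p-1)/(p-2)}$: the choice $\sigma_n\sim n(\log n)^2$ satisfies all of them, and your own bookkeeping would then yield energy $\mu_{n_\nu}\gtrsim \log(1/\nu)^{-p-\eps}$ below the diffusive scale, hence \eqref{eq:slowdissipating} for every $r>p$. That is precisely the conjecture stated after \autoref{th:solutionsslowdissipation}, which the authors say they cannot prove; the genuine loss from $p$ to $p\frac{p-1}{p-2}$ in the paper comes from the factor $\frac{p-2}{p-1}$ in the regularity-transfer step of \autoref{thm:slowly dissipating solutions}, not from any realisability constraint on the construction. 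So either your heuristic ``dissipated energy $=$ energy below scale $\sqrt\nu$'' is too strong to be true, or the missing comparison argument is where the exponent degrades --- and in either case the proposal does not establish the claim. Note finally that the two ingredients you would need to make it work are exactly the paper's: the cascade building block you propose to ``import'' is \autoref{prop:oldcounterexample}, and the rigorous substitute for your energy-transfer heuristic is \autoref{thm:slowly dissipating solutions} run in contrapositive form; once you use both, your argument collapses into the paper's contradiction proof.
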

We conjecture that the correct rate in \eqref{zzzzz2} is $\log(1/\nu)^{-p}$ and that \eqref{eq:slowdissipating} holds for any $r>p$. Our results come short in both ranges.

\subsubsection*{Idea of the proof of \autoref{prop: dissipation estimate}}
The crucial ingredient of proof is a new propagation of regularity result (\autoref{thm:regularity result}) for solutions to \eqref{CE}. The main novelty is that the constants appearing in the regularity estimate do not depend on the diffusivity parameter $\nu>0$. Basically it is an extension to the advection diffusion equation of a known result for transport equations \cite{BenBelgacemJabin2019,LegerFlavien16,BrueNguyen18c,BrueNguyen19}. 
We refer to \autoref{sec:reg} for a detailed outline of \autoref{thm:regularity result}. 

In order to explain the connection between propagation of regularity results and estimates on the energy dissipation rate we recall that, in the simple case $\nabla b\in L^{\infty}$, solutions to \eqref{Tr} and \eqref{CE} propagate the Sobolev regularity of the initial data for any $1\le p\le \infty$ according to
\begin{equation}\label{eq:simple}
	\norm{ \nabla u^{\nu}_t}_{L^p} \le \norm{\nabla u_0}_{L^p} e^{c t \norm{\nabla b}_{L^{\infty}}},
\end{equation}
where $c>0$ does not depend on $\nu$. This can be checked either by means of energy estimates or by studying the regularity of the stochastic flow \eqref{eq:SDE}. Having such a strong regularity result at hand the upper bound on the energy dissipation rate \eqref{eq:fastdissipation} immediately follows.

In the Sobolev setting estimates like \eqref{eq:simple} are false in general \cite{AlbertiCrippaMazzuccato14,AlbertiCrippaMazzuccato16,AlbertiCrippaMazzuccato18}. The propagated regularity is very mild \cite{LegerFlavien16,BrueNguyen18c}, and therefore not useful to  bound directly the energy dissipation rate. To overcome this problem we use an interpolation argument along with a priori estimate on $\nu^2 \int_0^t \norm{\Delta u_s^{\nu}}_{L^2}^2 \di s$ in terms of the energy dissipation rate (Cf. \autoref{prop: laplacianbound}).

\subsubsection*{Idea of the proof of \autoref{th:solutionsslowdissipation}}
To prove the existence of solutions with ``slow dissipation rate'' we exploit the existence of rough solution to the transport equation (see \autoref{prop:oldcounterexample}). 

The main idea is that quantitative bounds on the energy dissipation rate imply regularity results for transport equations. This has been made quantitative in \autoref{thm:slowly dissipating solutions} by showing the implication
\begin{equation*}
	\nu \int_0^t \norm{\nabla u_s^{\nu}}_{L^2}^2\di s \le C \log(1/\nu)^q
	\implies u_t^0\in H^{\log, r}
	\quad \text{for any $0<r< q\, \frac{p-2}{p-1},$}
\end{equation*}
when $b\in L^{\infty}_t W^{1,p}_x$. Here $H^{\log, r}$ denotes a Sobolev space of functions with ``derivative of logarithmic order'' introduced in \autoref{sec:reg}. Although the logarithmic regularity is very mild  in \cite{BrueNguyen18c} we have built solutions to \eqref{Tr}, associated to $W^{1,p}$ velocity fields, that do not propagate the $H^{\log, r}$ regularity for $r>p$. This clearly leads to the sought conclusion.

\subsubsection*{Applications:}
An immediate consequence of \autoref{th:solutionsslowdissipation} is that the double exponential lower bound as in Poon's estimate \eqref{eq:Poon} does not hold in the Sobolev setting since it forces
\begin{equation*}
	\nu \int_0^t \norm{\nabla u_s^{\nu}}_{L^2}^2 \di s \le C \nu
	\quad \text{for all $\nu\in(0,1)$.}
\end{equation*}
In view of \autoref{prop: dissipation estimate}, a natural variant of Poon's estimate is given by the following.

\begin{conjecture}\label{Conjecture:open}
	Fix $p\in [1,+\infty)$.
	Let $b\in C^{\infty}([0,T]\times \TT^d)$ be divergence free and $u_0\in W^{1,2}(\TT^d)$. Then, any solution $u_t^{\nu}$ to \eqref{CE} satisfies
	\begin{equation}\label{eq:conjectureopen}
	\norm{u_t}_{L^2}^2\ge \norm{u_0}_{L^2}^2
	\exp\left\lbrace  -\log(1/\nu)^{-p} C_1 \int_0^t 
	\exp\left\lbrace C_2\int_0^s \norm{\nabla b_r}_{L^p}\di r\right\rbrace
	\di s\right\rbrace,
	\end{equation}
	for any $\nu\in (0,1)$ and $t>0$. Here $C_1=C_1(u_0,p,d)>0$, and $C_2=C_2(p,d)>0$.
\end{conjecture}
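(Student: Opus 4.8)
The bound \eqref{eq:conjectureopen} is a Sobolev counterpart of Poon's estimate \eqref{eq:Poon}, and the natural line of attack is to imitate Poon's frequency-function argument while trading the $L^\infty$ control of $\nabla b$ for $L^p$ control at the price of a logarithmic, rather than polynomial, loss in $\nu$. Since $b$ is smooth every manipulation below is classical; the whole content is that the constants must depend on $\nabla b$ only through $\norm{\nabla b}_{L^p}$, so that the estimate is stable under approximation and is morally a $W^{1,p}$ statement. Introducing the frequency function $\Lambda(t):=\norm{\nabla u_t^\nu}_{L^2}^2/\norm{u_t^\nu}_{L^2}^2$, the energy balance \eqref{eq:energybalance} gives $\frac{d}{dt}\log\norm{u_t^\nu}_{L^2}^2=-2\nu\Lambda(t)$, so \eqref{eq:conjectureopen} is exactly the upper bound
\[
\nu\int_0^t\Lambda(s)\,\di s\le \tfrac12 C_1\log(1/\nu)^{-p}\int_0^t\exp\Big\{C_2\!\int_0^s\norm{\nabla b_r}_{L^p}\di r\Big\}\di s .
\]
The plan is to prove the pointwise-in-time estimate $\nu\Lambda(s)\lesssim \log(1/\nu)^{-p}\,R(s)$, with $R(s):=\exp\{C_2\int_0^s\norm{\nabla b_r}_{L^p}\di r\}$, and then integrate.

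I would split this pointwise estimate into two independent ingredients. The first is a propagation of regularity statement, \emph{uniform in $\nu$}: the logarithmic Sobolev norm of $u_s^\nu$ in the scale $H^{\log,r}$ of \autoref{sec:reg} grows at most like $R(s)=\exp\{C_2\int_0^s\norm{\nabla b_r}_{L^p}\}$, with $C_2=C_2(p,d)$ and a constant set by $u_0$. This is precisely the strength of \autoref{thm:regularity result}, whose constants do not see the diffusivity, and it is the mechanism producing the Gronwall factor $R(s)$—the rate $\norm{\nabla b_r}_{L^p}$ in the exponent is inherited from the $W^{1,p}$ (Crippa--De Lellis type) flow estimates underlying the regularity theory. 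The second ingredient is the conversion of this logarithmic regularity into a dissipation rate: diffusion at level $\nu$ only probes frequencies $\abs\xi\lesssim\nu^{-1/2}$, and, heuristically, a function carrying $p$ logarithmic derivatives localizes only a $\log(\nu^{-1/2})^{-p}\simeq\log(1/\nu)^{-p}$ fraction of its energy in the gradient at that scale. Making this quantitative—upgrading the $\nu$-uniform $H^{\log,r}$ bound to $\nu\norm{\nabla u_s^\nu}_{L^2}^2\lesssim \log(1/\nu)^{-p}\,\norm{u_s^\nu}_{L^2}^2\,R(s)$—is the ``new result connecting the energy dissipation rate to regularity estimates'', realized at the level of the time-integrated dissipation in \autoref{prop: dissipation estimate}.

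Technically the conversion passes through the driving term of the frequency function. Differentiating $\Lambda$, using the incompressibility of $b$ to kill the transport contribution, and discarding the nonpositive diffusive remainder via the Cauchy--Schwarz inequality $\norm{\nabla u}_{L^2}^4\le\norm{u}_{L^2}^2\norm{\Delta u}_{L^2}^2$, one is left with a driving term bounded, by Hölder, by $\norm{\nabla b}_{L^p}\norm{\nabla u}_{L^{2p/(p-1)}}^2$. The crux is to estimate this supercritical gradient norm with only a logarithmic $\nu$-loss: the naive Gagliardo--Nirenberg interpolation $\norm{\nabla u}_{L^{2p/(p-1)}}\lesssim\norm{\nabla u}_{L^2}^{1-\theta}\norm{\Delta u}_{L^2}^{\theta}$, $\theta=d/(2p)$, is fatal, since balancing $\norm{\Delta u}_{L^2}$ against the available diffusion pays a \emph{power} $\nu^{-\theta/(1-\theta)}$ of $1/\nu$. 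The replacement is a logarithmic (Brezis--Gallouet type) interpolation in which $\norm{\Delta u}_{L^2}$ enters only through $\log(\norm{\Delta u}_{L^2}/\norm{\nabla u}_{L^2})$; fed with the a priori bound $\nu^2\int_0^t\norm{\Delta u_s^\nu}_{L^2}^2\di s\lesssim\nu\int_0^t\norm{\nabla u_s^\nu}_{L^2}^2\di s$ of \autoref{prop: laplacianbound}, which pins the relevant frequency at $\nu^{-1/2}$, this turns the supercritical norm into $\norm{\nabla u}_{L^2}^2$ times a power of $\log(1/\nu)$, and it is the sharp exponent of this logarithm that one must track.

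The hard part—and the reason the statement is offered as a conjecture—is the exact exponent of the logarithm. Running this scheme at the level of the integrated dissipation is precisely \autoref{prop: dissipation estimate}, and there it produces the rate $\log(1/\nu)^{-(p-1)}$, one power short of the conjectured $\log(1/\nu)^{-p}$; on the other side, \autoref{th:solutionsslowdissipation} shows the exponent cannot exceed $p(p-1)/(p-2)$, so the target $p$ sits strictly inside the known window $[\,p-1,\ p(p-1)/(p-2)\,]$ and is the believed-sharp value. The lost power is not an artifact of bookkeeping but reflects a genuine inefficiency in passing from an $H^{\log,r}$ regularity estimate to a pointwise-in-time frequency bound, compounded by the circularity that controlling $\nu\Lambda(s)=\nu\norm{\nabla u_s^\nu}_{L^2}^2/\norm{u_s^\nu}_{L^2}^2$ already presupposes the lower bound on $\norm{u_s^\nu}_{L^2}$ one is trying to prove (to be closed by a continuity argument). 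Removing the extra power would require either a sharper logarithmic interpolation than the one furnished by \autoref{thm:regularity result} or a fully $\nu$-independent control of the supercritical gradient norm, and it is in this range that the present methods come short.
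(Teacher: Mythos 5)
There is a fundamental mismatch here: \eqref{eq:conjectureopen} is stated in the paper as an \emph{open conjecture} (\autoref{Conjecture:open}), and the paper itself contains no proof of it --- only the strictly weaker partial result \autoref{prop:tripleexponential}, which for every $\alpha\in[0,p-1)$ gives a lower bound with $\log(1/\nu)^{-\alpha}$ in place of $\log(1/\nu)^{-p}$, with a triple exponential in time instead of the conjectured $\int_0^t\exp\left\lbrace C_2\int_0^s\norm{\nabla b_r}_{L^p}\di r\right\rbrace\di s$, and under the extra hypothesis $b\in L^{\infty}$. Your proposal, as you yourself concede in the final paragraph, is likewise not a proof. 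The decisive step --- the pointwise-in-time bound $\nu\norm{\nabla u_s^\nu}_{L^2}^2\lesssim\log(1/\nu)^{-p}\norm{u_s^\nu}_{L^2}^2R(s)$, with your notation $R(s)=\exp\left\lbrace C_2\int_0^s\norm{\nabla b_r}_{L^p}\di r\right\rbrace$ --- is never established; what the ingredients you invoke (frequency splitting at $\abs{k}\sim\nu^{-1/2}$, the $\nu$-uniform $H^{\log,p}$ propagation of \autoref{thm:regularity result}, and \autoref{prop: laplacianbound}) actually deliver is exactly the paper's \autoref{prop: dissipation estimate}: a \emph{time-integrated} estimate with exponent $p-1$, one full power of the logarithm short of the target. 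That missing power is the entire content of the conjecture, so the gap is not bookkeeping.

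Two concrete steps in your scheme would also fail as written. First, \autoref{prop: laplacianbound} controls only $\nu^2\int_0^t\norm{\Delta u_s^\nu}_{L^2}^2\di s$; it gives no bound on $\norm{\Delta u_s^\nu}_{L^2}$ at a fixed time $s$, so the frequency-function differential inequality cannot be closed pointwise in $s$ without a genuinely new a priori estimate --- this is precisely why the paper works with $\nu\int_0^t\norm{\nabla u_s^\nu}_{L^2}^2\di s$ throughout rather than with $\Lambda(s)$. Second, the ``logarithmic (Brezis--Gallouet type) interpolation'' that is supposed to replace Gagliardo--Nirenberg and cost only a power of $\log(1/\nu)$ is asserted, not proved, and it is exactly where the claimed gain from $p-1$ to $p$ would have to occur; nothing of this kind is available in the paper, whose own attempt in this direction is the improved estimate \eqref{zzzz1}, which the authors explicitly state they are unable to show. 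Combined with the circularity you flag in the denominator of $\Lambda$, your text stands as an accurate heuristic commentary (including the correct observation, via \autoref{th:solutionsslowdissipation}, that the sharp exponent must lie in $[\,p-1,\ p(p-1)/(p-2)\,]$), but not as a proof. If you want the strongest rigorous statement currently in reach, reproduce \autoref{prop:tripleexponential}: split time at $t_{\nu}\sim\log(1/\nu)^{(p-1-\alpha)/p}$, apply \autoref{prop: dissipation estimate} together with the energy balance \eqref{eq:energybalance} for $t\le t_{\nu}$, and for $t>t_{\nu}$ fall back on the Poon-type estimate of \cite{MilesDoering18} involving $\norm{b}_{L^{\infty}}$, which is where both the extra hypothesis and the triple exponential enter.
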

We refer to \autoref{subsection:lowerboundonL2norm} for the discussion of a positive result (\autoref{prop:tripleexponential}) in this direction.

\medskip

An other interesting consequence of \autoref{prop: dissipation estimate} is the following upper bound on the enhanced dissipation rate in the setting of $W^{1,p}$ divergence free vector fields.

\begin{proposition}\label{prop:dissipation time scale}
	Let $b\in L^{\infty}([0,+\infty),W^{1,p}(\TT^d,\setR^d))$ be a divergence free vector field for some $p>2$. Given $u_0\in W^{1,2}(\TT^d)\cap L^{\infty}$, if there exists $r:(0,\nu_0)\to (0,+\infty)$ for some $0<\nu_0<1$, which satisfies
	\begin{equation}\label{z16}
	\norm{u^{\nu}_t}_{L^2}^2\le e^{-r(\nu)t}\norm{u_0}_{L^2}^2
	\quad\text{for any $t>1/\nu_0$ and $\nu\in (0,\nu_0)$,}	
	\end{equation}
	then
	\begin{equation}\label{z14}
	\limsup_{\nu\downarrow 0}\frac{ r(\nu)}{\log(1/\nu)^{-\frac{p-1}{p}}} <\infty.
	\end{equation}
\end{proposition}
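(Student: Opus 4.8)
The plan is to pit a lower bound on the dissipated energy, coming from the assumed decay, against the upper bound furnished by \autoref{prop: dissipation estimate}, over a time window $[0,t]$ whose length $t=t(\nu)$ will be tuned to $\nu$. From below, the energy balance \eqref{eq:energybalance} together with \eqref{z16} gives, for every $t>1/\nu_0$,
\[
2\nu\int_0^t\norm{\nabla u^\nu_s}_{L^2}^2\di s
=\norm{u_0}_{L^2}^2-\norm{u^\nu_t}_{L^2}^2
\ge \left(1-e^{-r(\nu)t}\right)\norm{u_0}_{L^2}^2 .
\]
From above, \autoref{prop: dissipation estimate} controls the same quantity. Writing $A:=\norm{u_0}_{L^2}^2$, $B:=\norm{\nabla b}_{L^\infty_tL^p_x}^p$ and absorbing the structural constants into a single $C'=C'(p,d,u_0)$, the two bounds combine into the master inequality
\begin{equation*}
\left(1-e^{-r(\nu)t}\right)A
\le C'\left[\,\nu t+\frac{B\,t^p+1}{\log\!\left(\frac{1}{\nu t}+2\right)^{p-1}}\,\right],
\qquad t>1/\nu_0 . \tag{$\star$}
\end{equation*}

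The natural test length is $t=1/r(\nu)$, which pins the left-hand side of $(\star)$ to the fixed positive value $(1-e^{-1})A$ and thereby converts $(\star)$ into an upper bound for $r(\nu)$. To make this choice admissible I would first record that $r(\nu)<\nu_0$ for all sufficiently small $\nu$: evaluating $(\star)$ at the fixed time $t=2/\nu_0$ makes the right-hand side tend to $0$ as $\nu\downarrow0$, whereas $r(\nu)\ge\nu_0$ would keep the left-hand side at least $(1-e^{-2})A>0$, a contradiction. Hence $1/r(\nu)>1/\nu_0$, and inserting $t=1/r(\nu)$ into $(\star)$ yields
\[
(1-e^{-1})A\le C'\left[\frac{\nu}{r(\nu)}+\frac{B\,r(\nu)^{-p}+1}{\log\!\left(\frac{r(\nu)}{\nu}+2\right)^{p-1}}\right].
\]

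The main obstacle is that the decay rate in $(\star)$ is hidden in the denominator $\log(1/(\nu t)+2)^{p-1}$, which after the substitution equals $\log(r(\nu)/\nu+2)^{p-1}$, and this is only large when $r(\nu)$ is not comparable to $\nu$. I would resolve this by a dichotomy. If $r(\nu)\le\sqrt\nu$, then $r(\nu)\le\sqrt\nu\le\log(1/\nu)^{-(p-1)/p}$ for $\nu$ small and there is nothing to prove. If instead $r(\nu)>\sqrt\nu$, then on one hand $\nu/r(\nu)<\sqrt\nu\to0$, so for small $\nu$ the first bracket term is below half of $(1-e^{-1})A$ and can be absorbed; on the other hand $r(\nu)/\nu>\nu^{-1/2}$, so $\log(r(\nu)/\nu+2)\ge\tfrac12\log(1/\nu)$. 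Bounding $B\,r(\nu)^{-p}+1\le(B+1)r(\nu)^{-p}$ via $r(\nu)<\nu_0<1$, the displayed inequality collapses to
\[
r(\nu)^p\le\frac{2^{p}C'(B+1)}{(1-e^{-1})A}\,\log(1/\nu)^{-(p-1)},
\]
that is $r(\nu)\le C_*\log(1/\nu)^{-(p-1)/p}$. Combining the two branches gives $r(\nu)\lesssim\log(1/\nu)^{-(p-1)/p}$ for all small $\nu$, which is precisely \eqref{z14}. The only genuinely delicate points are the two elementary comparisons $\sqrt\nu\le\log(1/\nu)^{-(p-1)/p}$ and $\nu^{-1/2}<r(\nu)/\nu$ used to linearize the logarithm into $\log(1/\nu)$; everything else is a routine rearrangement of $(\star)$, and one checks along the way that the exponent $-(p-1)/p$ is exactly what the balance $B\,r(\nu)^{-p}\sim\log(1/\nu)^{p-1}$ dictates.
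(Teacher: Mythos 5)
Your proposal is correct and uses essentially the same argument as the paper: both pit the lower bound on the dissipated energy coming from \eqref{eq:energybalance} and \eqref{z16} against the upper bound of \autoref{prop: dissipation estimate} over a time window tuned to $\nu$. The only difference is the choice of that window — the paper takes the explicit scale $t=\alpha\log(1/\nu)^{\frac{p-1}{p}}$ with $\alpha$ fixed and small enough that the dissipation bound stays below $\tfrac12\norm{u_0}_{L^2}^2$, so that $r(\nu)t\le\log 2+o(1)$ falls out immediately, whereas your implicit choice $t=1/r(\nu)$ forces the (correctly executed) extra steps of the admissibility check $r(\nu)<\nu_0$ and the dichotomy $r(\nu)\lessgtr\sqrt{\nu}$ to linearize the logarithm, but both routes produce the same exponent $\frac{p-1}{p}$.
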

In other words the upper bound $r(\nu) \le O(\log(1/\nu)^{-\frac{p-1}{p}})$ holds in the Sobolev setting. Notice that it is little worse than  $O(\log(1/\nu)^{-1})$, the one available for smooth vector fields.

The last application of \autoref{prop: dissipation estimate} is a quantitative estimate on the rate of convergence in the vanishing viscosity limit.

\begin{theorem}\label{th:vanishing viscosity}
	Let $b\in L^{\infty}([0,+\infty),W^{1,p}(\TT^d,\setR^d))$ be a divergence free vector field for some $p>2$. Given $u_0\in W^{1,2}(\TT^d)\cap L^{\infty}$ we consider $u^0,u^{\nu}$, respectively, solutions to \eqref{CE} and \eqref{Tr}. Then it holds
	\begin{equation}\label{Z6}
	\sup_{s\in [0,t]}\norm{u_s^\nu-u_s^0}^2_{L^2}
	\le 
	Ct\left[
	\nu+t \nu^{\frac{p-2}{p-1}}+\frac{t^{p-1}+1}{\log\left( \frac{1}{\nu t}+2 \right)^{p-2}}\right]
	\quad \text{for every $\nu>0$ and $t>0$},
	\end{equation}
	where $C=C_0(1+\norm{\nabla b}_{L^{\infty}_tL^p_x}^p)(\norm{u_0}_{W^{1/2}}^2+\norm{u_0}_{L^{\infty}}^2)$.
\end{theorem}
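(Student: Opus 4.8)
The plan is to run an $L^2$ energy estimate on the difference $w_s:=u^\nu_s-u^0_s$ and to reduce the whole problem to the two a priori bounds already available for \eqref{CE}, namely the dissipation estimate \eqref{eq:dissipation} and the Laplacian bound of \autoref{prop: laplacianbound}, the coupling being made possible by the propagation of logarithmic regularity.

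First I would record the basic energy identity. For $s>0$ the function $u^\nu_s$ is smooth (parabolic regularization), hence an admissible test function in the distributional formulation of \eqref{Tr}; using $\div b=0$ one computes $\frac{\di}{\di s}\int_{\TT^d}u^\nu_s u^0_s\di x=\nu\int_{\TT^d}u^0_s\Delta u^\nu_s\di x$, the transport terms cancelling. Combining this with the energy balance \eqref{eq:energybalance} for $u^\nu$ and with the conservation $\norm{u^0_s}_{L^2}=\norm{u_0}_{L^2}$ (renormalization in the DiPerna--Lions theory) gives, for every $s$,
\begin{equation*}
\tfrac12\norm{w_s}_{L^2}^2=-\nu\int_0^s\norm{\nabla u^\nu_\tau}_{L^2}^2\di\tau-\nu\int_0^s\int_{\TT^d}u^0_\tau\,\Delta u^\nu_\tau\di x\di\tau.
\end{equation*}
The first term is nonpositive, so
\begin{equation*}
\sup_{s\in[0,t]}\tfrac12\norm{w_s}_{L^2}^2\le\nu\int_0^t\Big|\int_{\TT^d}u^0_\tau\,\Delta u^\nu_\tau\di x\Big|\di\tau,
\end{equation*}
and everything is reduced to controlling this cross term. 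The obstruction --- and the main difficulty of the theorem --- is that $u^0$ carries no Sobolev regularity, so one cannot simply integrate by parts to transfer a full derivative onto it.

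To bypass this I would mollify $u^0$ at a scale $\delta\in(0,1)$, writing $u^0=u^0_\delta+(u^0-u^0_\delta)$ with $u^0_\delta:=u^0*\rho_\delta$, and split
\begin{equation*}
\int_{\TT^d}u^0\,\Delta u^\nu\di x=-\int_{\TT^d}\nabla u^0_\delta\cdot\nabla u^\nu\di x+\int_{\TT^d}(u^0-u^0_\delta)\,\Delta u^\nu\di x.
\end{equation*}
The regular part is estimated by $\norm{\nabla u^0_\delta}_{L^2}\norm{\nabla u^\nu}_{L^2}\le C\delta^{-1}\norm{u_0}_{L^2}\norm{\nabla u^\nu}_{L^2}$. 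For the remainder I would use that $u^0$, as the $\nu\downarrow0$ limit of solutions enjoying the $\nu$-uniform logarithmic regularity furnished by \autoref{thm:regularity result}, belongs to a logarithmic Sobolev space $H^{\log,r}$ with $\norm{u^0_\tau}_{H^{\log,r}}\le C$; the elementary mollification estimate $\norm{f-f*\rho_\delta}_{L^2}\le C\log(1/\delta)^{-r}\norm{f}_{H^{\log,r}}$ then bounds the remainder by $C\log(1/\delta)^{-r}\norm{\Delta u^\nu}_{L^2}$. Integrating in time and applying Cauchy--Schwarz in $\tau$ reduces the two pieces to
\begin{equation*}
\nu\int_0^t\norm{\nabla u^\nu_\tau}_{L^2}\di\tau\le(\nu t)^{1/2}\Big(\nu\int_0^t\norm{\nabla u^\nu_\tau}_{L^2}^2\di\tau\Big)^{1/2},\qquad\nu\int_0^t\norm{\Delta u^\nu_\tau}_{L^2}\di\tau\le t^{1/2}\Big(\nu^2\int_0^t\norm{\Delta u^\nu_\tau}_{L^2}^2\di\tau\Big)^{1/2},
\end{equation*}
which are precisely the quantities controlled by \eqref{eq:dissipation} and by \autoref{prop: laplacianbound}.

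It then remains to insert these bounds and optimize over $\delta$. Choosing $\delta$ as a suitable power of $\nu t$ converts the regular part into the polynomial contributions $\nu$ and $\nu^{(p-2)/(p-1)}$, while the logarithmic remainder produces the factor $\log(1/(\nu t)+2)^{-(p-2)}$; the norms $\norm{u_0}_{W^{1,2}}$ and $\norm{u_0}_{L^\infty}$ and the power $\norm{\nabla b}_{L^\infty_tL^p_x}^p$ enter through the inputs \eqref{eq:dissipation}, \autoref{prop: laplacianbound} and the $H^{\log,r}$ bound, and tracking the powers of $t$ across these gives the stated prefactor. I expect the delicate quantitative point to be exactly this final bookkeeping: the pairing of the logarithmically-regular factor $u^0$ against the rough $\Delta u^\nu$, combined with the Cauchy--Schwarz loss of a square root, costs one power of the logarithm relative to the dissipation rate $\log(1/(\nu t)+2)^{-(p-1)}$, which is what downgrades the exponent from $p-1$ to $p-2$ and must be matched against the order $r$ of regularity afforded by the propagation results.
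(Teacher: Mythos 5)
Your reduction of the problem to the cross term $\nu\int_0^t\bigl|\int u^0_\tau\,\Delta u^\nu_\tau\di x\bigr|\di\tau$, and the inputs you invoke (\autoref{prop: dissipation estimate}, \autoref{prop: laplacianbound}, the $\nu$-independent log-regularity of $u^0$ from \autoref{cor:regularity}), are all legitimate; the problem is the final step you defer to ``bookkeeping'', which is exactly where the argument fails: it does not produce the prefactor $Ct[\cdots]$ in \eqref{Z6}. By writing $\int w_\tau\Delta u^\nu_\tau\di x=-\norm{\nabla u^\nu_\tau}_{L^2}^2-\int u^0_\tau\Delta u^\nu_\tau\di x$ and estimating the two pieces separately, you throw away the smallness of $w=u^\nu-u^0$ itself. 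Your remainder term, after Cauchy--Schwarz in time, is bounded by
\begin{equation*}
\log(1/\delta)^{-p/2}\,\sup_{\tau\le t}\norm{u^0_\tau}_{H^{\log,p}}\; t^{1/2}\Bigl(\nu^2\int_0^t\norm{\Delta u^\nu_\tau}_{L^2}^2\di\tau\Bigr)^{1/2},
\end{equation*}
i.e.\ the \emph{square root} of the quantity the paper controls, times a logarithmic gain. (Incidentally, the correct mollification estimate is $\norm{f-f\ast\rho_\delta}_{L^2}\lesssim\log(1/\delta)^{-r/2}\norm{f}_{H^{\log,r}}$, with $r/2$ and not $r$; with $r=p$ this still yields, for fixed $t$, the exponent $p/2+(p-2)/2=p-1$, so your $\nu$-rate at fixed $t$ is fine, indeed better than $p-2$.) The fatal issue is the $t$-dependence: $\nu\int_0^t\norm{\Delta u^\nu_\tau}_{L^2}\di\tau$ is genuinely of order $(\nu t)^{1/2}$ (this is already the case for the heat equation with $u_0\in W^{1,2}$), and since $u^0$ carries only logarithmic --- not Sobolev --- regularity, mollification gains only powers of $\log$. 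Hence for fixed $\nu$ and $t\to 0$ your bound is of size $(\nu t)^{1/2}$ up to logarithms (the regular part, carrying the factor $\delta^{-1}$, suffers the same defect), whereas the right-hand side of \eqref{Z6} is of size $C\nu t$ in that regime. Since $(\nu t)^{1/2}\log(1/\nu t)^{-p/2}\gg\nu t$, your route cannot deliver the stated inequality, only a variant with strictly worse $t$-prefactors; the assertion that ``tracking the powers of $t$ across these gives the stated prefactor'' is false.

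The missing idea --- and the paper's entire proof --- is to keep the factor $w$ instead of splitting it: since $\frac{\di}{\di\tau}\norm{w_\tau}_{L^2}^2=2\nu\int w_\tau\Delta u^\nu_\tau\di x\le 2\nu\norm{w_\tau}_{L^2}\norm{\Delta u^\nu_\tau}_{L^2}$ by H\"older, one gets $\norm{w_t}_{L^2}\le\nu\int_0^t\norm{\Delta u^\nu_\tau}_{L^2}\di\tau$ and therefore
\begin{equation*}
\sup_{s\in[0,t]}\norm{w_s}_{L^2}^2\le t\,\nu^2\int_0^t\norm{\Delta u^\nu_\tau}_{L^2}^2\di\tau,
\end{equation*}
after which \eqref{Z6} follows directly by combining \autoref{prop: laplacianbound} with \autoref{prop: dissipation estimate}. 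This Gronwall-type step \emph{squares} the estimate --- which is what produces both the prefactor $t$ and the exponent $p-2$ --- and it requires no regularity of $u^0$ whatsoever. Your diagnosis that the main difficulty is the impossibility of transferring derivatives onto $u^0$ is therefore a misreading of the problem, and the mollification machinery you introduce to address it is precisely what costs you the theorem.
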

As far as we know \eqref{Z6} is the first quantitative vanishing viscosity estimate in terms of strong norms in the framework of Sobolev velocity fields. Previous results, such as \cite[Theorem 2]{Seis18} have dealt with weak norms.

It is worth noticing that \autoref{th:vanishing viscosity} is almost optimal, we refer to \autoref{subsection:vanishingviscositylimit} for a discussion on this.

\subsection*{Organization of the paper}

The rest of the paper is devoted to the proof of the outlined results. More specifically in \autoref{sec:reg} we present the propagation of regularity result (\autoref{thm:regularity result}) while \autoref{sec:slow dissipating solutions} is devoted to the proof of existence of ``slow dissipating solutions'' (\autoref{th:solutionsslowdissipation}). In \autoref{sec:logarithmicestimateonthedissipationrate} we show the logarithmic estimate on the energy dissipation rate (\autoref{prop: dissipation estimate}) and its corollaries. Precisely, in \autoref{subsection:lowerboundonL2norm} we present the proof of \autoref{prop:dissipation time scale} and we discuss a positive result in the direction of \autoref{Conjecture:open}. Eventually we show \autoref{th:vanishing viscosity} in \autoref{subsection:vanishingviscositylimit}.

\subsection*{Acknowledgements}
Quoc-Hung Nguyen's research was supported by the ShanghaiTech University startup fund. Part of this work was done while Quoc-Hung Nguyen was visiting Scuola Normale Superiore in Pisa.

\section{Regularity result}\label{sec:reg}
In this section, we present a propagation of regularity result for solutions to \eqref{CE}, that will play a central role in the sequel.
Here and in the rest of the paper we tacitly identify any $f:\TT^d\to \setR$ with a $1$-periodic function on $\setR^d$.

Let us begin by introducing a class of functional spaces. For any $\alpha\in (0,+\infty)$ we define 
\begin{equation}\label{eq:logSobolevsemonorm}
	[u]_{H^{\log,\alpha}}^2:= \int_{B_{1/3}}\int_{\TT^d} \frac{|u(x+h)-u(x)|^2}{|h|^d}\frac{1}{\log(1/|h|)^{1-\alpha}}\di x\di h
\end{equation}
and the related log-Sobolev class 
\begin{equation}\label{eq:logSobolevclass}
H^{\log,\alpha}:=\set{u\in L^2(\TT^d): \norm{u}_{H^{\log,\alpha}}^2:=\norm{u}_{L^2}^2+[u]_{H^{\log,\alpha}}^2<\infty}.
\end{equation}
The following characterisation of $H^{\log,\alpha}$ will play a role in the rest of the paper
\begin{equation}\label{eq:logSobolevvsFourier}
	\norm{u}_{H^{\log,\alpha}}^2\sim_{\alpha,d} \sum_{k\in \mathbb{Z}^d} \log(2+|k|)^{\alpha} |\hat u(k)|^2,
\end{equation}
where $\hat u ( k ):=\int u(x) e^{-ix\cdot k}\di x$.
We refer to \cite{BrueNguyen18c} for a proof of \eqref{eq:logSobolevvsFourier} in the case in which the ambient space is $\setR^d$.

The main result of the section is the following.

\begin{theorem}\label{thm:regularity result}
	Let $b\in L^1([0,T],W^{1,p}(\TT^d,\setR^d))$ be a divergence free vector field for some $p>1$.
	Then, any solution $u\in L^{\infty}([0,T]\times \TT^d)$ to \eqref{CE} satisfies
	\begin{align}\nonumber
	&	\int_{B_{\frac{1}{10}}} \int_{\TT^d} \frac{1\wedge | u_t(x+h)-u_t(x)|^q}{|h|^d}\frac{1}{\log(1/|h|)^{1-p}} \di x \di h\\& \lesssim_{p,q,d}\left(\int_0^t \norm{\nabla b_s}_{L^p} ds\right)^p+	\int_{B_{3/4}} \int_{\TT^d} \frac{1\wedge | u_0(x+h)-u_0(x)|^q}{|h|^d}\frac{1}{\log(1/|h|)^{1-p}} \di x \di h	\label{eq: LogSobolevContinuity} 
	\end{align} 
	for any $0< q<\infty$.\footnote{ Here we have denoted $a_1\wedge a_2$ by $\min\{a_1,a_2\}$}
\end{theorem}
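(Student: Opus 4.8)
The plan is to track the time evolution of the left-hand side itself. Fix $\rho=\tfrac{1}{10}$, write $\delta_h u_t(x):=u_t(x+h)-u_t(x)$ and $w(h):=|h|^{-d}\log(1/|h|)^{p-1}$, so that the quantity to control is $Q(t):=\int_{B_\rho}\int_{\TT^d}\Gamma(\delta_h u_t(x))\,w(h)\,dx\,dh$ with $\Gamma(s):=1\wedge|s|^q$ (smoothed if convenient). The starting point is the equation for the increment: subtracting \eqref{CE} at $x+h$ and at $x$, and using the \emph{unshifted} field $b_t(x)$ as transport velocity, one gets
\[
\partial_t\delta_h u+b_t(x)\cdot\nabla_x\delta_h u-\nu\Delta_x\delta_h u=-\delta_h b_t(x)\cdot\nabla u_t(x+h),
\]
with $\delta_h b_t(x):=b_t(x+h)-b_t(x)$. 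The crucial algebraic identity is $\nabla u_t(x+h)=\nabla_h\delta_h u_t(x)$, which turns the source into $-\delta_h b_t(x)\cdot\nabla_h\delta_h u_t(x)$, a perfect $h$-gradient once composed with $\Gamma$, since $\Gamma'(\delta_h u)\nabla_h\delta_h u=\nabla_h[\Gamma(\delta_h u)]$.

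Differentiating $Q$ and inserting the equation produces three terms. The transport term integrates to $\int_{\TT^d}\Gamma(\delta_h u)\,\div_x b_t(x)\,dx=0$ by incompressibility. The source term is integrated by parts in $h$ over $B_\rho$: since $\div_h\delta_h b_t(x)=(\div b_t)(x+h)=0$ — incompressibility used a second time — the $h$-derivative lands only on the weight, leaving
\[
\int_{B_\rho}\int_{\TT^d}\bigl(\nabla_h w(h)\cdot\delta_h b_t(x)\bigr)\,\Gamma(\delta_h u_t(x))\,dx\,dh
\]
plus a boundary contribution at $|h|=\rho$, where $w$ is bounded. Now $|\nabla_h w|\lesssim w/|h|$, while the Haj\l asz--Sobolev pointwise bound gives $|\delta_h b_t(x)|\lesssim|h|\,(M\nabla b_t(x)+M\nabla b_t(x+h))$; the factors of $|h|$ cancel and we reduce to controlling $\int_{B_\rho}\int_{\TT^d}w(h)\,M\nabla b_t(x)\,\Gamma(\delta_h u_t(x))\,dx\,dh$ together with its $h$-translate.

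To extract the $p$-th power I would pair, dyadically in $|h|\sim 2^{-k}$, the maximal function against $\Gamma(\delta_h u)$ by H\"older in $x$, using $\norm{M\nabla b}_{L^p}\lesssim\norm{\nabla b}_{L^p}$ (valid as $p>1$) and $\Gamma\le1$ so that $\Gamma^{p'}\le\Gamma$; a further H\"older in $h$ on each shell, where the log-weight has finite partial mass growing as a power of $k$, assembles into a Bernoulli-type differential inequality $\frac{d}{dt}Q(t)\lesssim\norm{\nabla b_t}_{L^p}\,Q(t)^{1-1/p}$. Integrating in time gives $Q(t)^{1/p}\lesssim Q(0)^{1/p}+\int_0^t\norm{\nabla b_s}_{L^p}\,ds$, hence the asserted bound with $\bigl(\int_0^t\norm{\nabla b_s}_{L^p}\,ds\bigr)^p$. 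The discrepancy between $B_{1/10}$ and $B_{3/4}$ is exactly the room needed to absorb the boundary term and the translation by $h$: characteristics may separate by an amount of order one, so time-$t$ increments at scale $1/10$ are governed by initial increments at all scales $\lesssim 3/4$.

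The main obstacle, and the genuine novelty, is the diffusion term $-\nu\int_{B_\rho}\int_{\TT^d}\Gamma''(\delta_h u)|\nabla_x\delta_h u|^2\,w$, which must be handled with constants independent of $\nu$. When $\Gamma$ is convex — for instance the quadratic functional $q=2$, which is just the $H^{\log,p}$ estimate — this term is nonpositive and is simply discarded, and the argument closes verbatim. For the truncated $\Gamma=1\wedge|s|^q$ this fails: $\Gamma$ is not convex, the term carries the wrong sign, and it cannot be dominated by the dissipation $\nu\int|\nabla_x\delta_h u|^2$, which is precisely the quantity one does not control uniformly in $\nu$. I would therefore treat the general case through the stochastic--Lagrangian representation $u_t^\nu(x)=\mathbb{E}[u_0(X_t(x))]$ for the SDE associated to \eqref{CE}: the two trajectories $X_t(x)$ and $X_t(x+h)$ are driven by the \emph{same} Brownian motion, so their difference solves the noiseless equation $\dot\xi_s=b_s(X_s(x+h))-b_s(X_s(x))$, eliminating all $\nu$-dependence, while incompressibility makes $x\mapsto X_s(x)$ measure preserving in expectation uniformly in $\nu$. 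Running the Crippa--De Lellis logarithmic estimate for $|\xi_s|/|h|$ along the flow, and integrating against $w(h)\,dh$, reproduces the same bound with $\nu$-uniform constants.
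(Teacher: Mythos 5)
Your fallback route for the general truncated integrand is the paper's own proof: a backward SDE driven by a single Brownian motion, so that the difference of two trajectories solves a noiseless ODE and every estimate is $\nu$-uniform, followed by Crippa--De Lellis-type maximal function and Gronwall bounds and the Feynman--Kac representation. But your sketch stops exactly where the paper's work begins. From the a.s.\ stretching estimate $e^{-g(x)-g(y)}\le |X_{t,s}(x)-X_{t,s}(y)|/|x-y|\le e^{g(x)+g(y)}$ with $\norm{g}_{L^p}\lesssim\int_0^t\norm{\nabla b_s}_{L^p}\di s$ (\autoref{prop:Lagrangianregularityestimate}), one still has to produce the two terms on the right-hand side of the asserted estimate. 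The paper does this through a Lusin-type pointwise characterization of the truncated log-Sobolev integrand (\autoref{prop:loglusinliprefined}), a Chebyshev/layer-cake argument on the bad set $\set{|h|^{1/2}e^{g(x+h)+g(x)}\ge 1}$ --- which is what generates the term $\bigl(\int_0^t\norm{\nabla b_s}_{L^p}\di s\bigr)^p$ --- and, on the complementary set, a comparison of the distorted scale $|h|e^{\pm(g(x+h)+g(x))}$ with the powers $|h|^{3/2}$ and $|h|^{1/2}$ plus a Fubini resummation, which is where the enlargement to $B_{3/4}$ actually comes from. Saying that one ``integrates against $w(h)\di h$'' assumes precisely this conversion; it is the technical core of the theorem, not a routine step. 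Note also that passing from the a.s.\ inequality to a statement about $u_t=\mathbb{E}[u_0\circ X_{t,0}]$ is not a direct application of Jensen, since $s\mapsto 1\wedge|s|^q$ is not convex; this requires an additional (e.g.\ normalization) argument.

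Separately, the Eulerian argument occupying most of your proposal has a concrete gap, so the claim that it ``closes verbatim'' for convex $\Gamma$ is unjustified. The algebra up to the source term is fine, but the assembly into $\frac{\di}{\di t}Q\lesssim\norm{\nabla b_t}_{L^p}\,Q^{1-1/p}$ fails because $w(h)=|h|^{-d}\log(1/|h|)^{p-1}$ has infinite mass on $B_{1/10}$. Your own dyadic scheme exhibits the problem: H\"older in $x$ and then in $h$ on the shell $|h|\sim 2^{-k}$ gives a contribution $\lesssim\norm{\nabla b_t}_{L^p}\,k^{(p-1)/p}\,Q_k^{1-1/p}$ with $Q=\sum_k Q_k$, and $\sum_k k^{(p-1)/p}Q_k^{1-1/p}$ cannot be bounded by $Q^{1-1/p}$ because $\sum_k k^{p-1}=\infty$. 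The loss is real, not an artifact of H\"older: for $u$ oscillating at scale $r$ (say $u(x)=\sin(x_1/r)$) one has $\int_{B_{1/10}}w(h)\bigl(1\wedge|u(x+h)-u(x)|^q\bigr)\di h\simeq\log(1/r)^p$ for generic $x$, so the source term is of size $\norm{M|\nabla b_t|}_{L^1}\log(1/r)^p$, whereas $\norm{\nabla b_t}_{L^p}Q^{1-1/p}\simeq\norm{\nabla b_t}_{L^p}\log(1/r)^{p-1}$: a logarithmic factor is irretrievably lost. This is exactly why the paper works at the Lagrangian level, where Gronwall acts on the logarithm of the stretching ratio rather than on the functional $Q$ itself; an Eulerian proof is possible (see the works of L\'eger and of Ben Belgacem--Jabin cited in the paper) but it requires a differently normalized functional, not this one.
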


In particular, choosing $q=2$ we get
\begin{corollary}\label{cor:regularity}
    Under the assumptions of \autoref{thm:regularity result} one has
	\begin{equation}\label{log-sobolev}
	[u_t]_{H^{\log,p}}\lesssim_{p,d} \left(\int_0^t\norm{\nabla b_s}_{L^p}\right)^{p/2}\norm{u_0}_{L^{\infty}}+	\norm{u_0}_{H^{\log,p}}
	\quad \text{for any $t\in [0,T]$.}
	\end{equation}
\end{corollary}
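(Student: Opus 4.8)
The plan is to read off the corollary from \autoref{thm:regularity result} with the choice $q=2$, the only work being to remove the truncation $1\wedge(\cdot)$ from the integrand, to pass from the balls $B_{1/10}$ and $B_{3/4}$ appearing there to the ball $B_{1/3}$ of the seminorm \eqref{eq:logSobolevsemonorm}, and---crucially---to do the first of these at a constant cost by normalising the solution. Throughout I abbreviate $\delta_h u(x):=u(x+h)-u(x)$ and $w(h):=|h|^{-d}\log(1/|h|)^{p-1}$, so that $[u]_{H^{\log,p}}^2=\int_{B_{1/3}}\int_{\TT^d}w(h)\,|\delta_h u|^2\,\di x\,\di h$; note that $w>0$ on $B_{3/4}$ and that $w$ is bounded above and below by constants $C(p,d)$ on every annulus $\{c_1\le |h|\le c_2\}\subset B_{3/4}\setminus\{0\}$.

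First I would normalise. Set $M:=\norm{u_0}_{L^\infty}$ and $\tilde u:=u/M$; then $\tilde u$ solves \eqref{CE} with datum $u_0/M$, the maximum principle gives $\norm{\tilde u_t}_{L^\infty}\le 1$, and the energy balance \eqref{eq:energybalance} gives $\norm{\tilde u_t}_{L^2}\le\norm{u_0/M}_{L^2}$. Because $\norm{\tilde u_t}_{L^\infty}\le 1$ one has $|\delta_h\tilde u_t|\le 2$ pointwise, whence the elementary bound $|\delta_h\tilde u_t|^2\le 4\,(1\wedge|\delta_h\tilde u_t|^2)$: the two sides coincide when $|\delta_h\tilde u_t|\le 1$, while for $1<|\delta_h\tilde u_t|\le 2$ the right-hand truncation equals $1$ and the left-hand side is at most $4$. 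This is the step that forces the preliminary normalisation; applied to the un-normalised $u$ the same inequality would carry the factor $1+4M^2$, destroying the homogeneity of the estimate.

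Now I would expand $[\tilde u_t]_{H^{\log,p}}^2$ and split the $h$-integral over $B_{1/10}$ and over the annulus $B_{1/3}\setminus B_{1/10}$. On $B_{1/10}$ the pointwise inequality above turns the integrand into at most $4\,w\,(1\wedge|\delta_h\tilde u_t|^2)$, which is controlled by the left-hand side of \eqref{eq: LogSobolevContinuity} (for $\tilde u$), hence by $C(\int_0^t\norm{\nabla b_s}_{L^p}\,\di s)^p$ plus the truncated initial integral over $B_{3/4}$. On the annulus one has $w\le C(p,d)$ and $\int_{\TT^d}|\delta_h\tilde u_t|^2\,\di x\le 4\norm{\tilde u_t}_{L^2}^2\le 4\norm{u_0/M}_{L^2}^2$, so that contribution is $\le C(p,d)\norm{u_0/M}_{L^2}^2$. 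The truncated initial integral is bounded by dropping the truncation and splitting $B_{3/4}=B_{1/3}\cup(B_{3/4}\setminus B_{1/3})$, giving $[u_0/M]_{H^{\log,p}}^2$ plus, by the same annulus argument, $C(p,d)\norm{u_0/M}_{L^2}^2$. Collecting the pieces and using $\norm{\plchldr}_{L^2}\le\norm{\plchldr}_{H^{\log,p}}$ yields
\[
[\tilde u_t]_{H^{\log,p}}^2\le C(p,d)\left(\int_0^t\norm{\nabla b_s}_{L^p}\,\di s\right)^p+C(p,d)\,\norm{u_0/M}_{H^{\log,p}}^2.
\]
Taking square roots (with $\sqrt{a+b}\le\sqrt a+\sqrt b$) and then multiplying through by $M=\norm{u_0}_{L^\infty}$, using the homogeneity $[u_t]_{H^{\log,p}}=M[\tilde u_t]_{H^{\log,p}}$ and $\norm{u_0}_{H^{\log,p}}=M\norm{u_0/M}_{H^{\log,p}}$, produces exactly \eqref{log-sobolev}. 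There is no genuine obstacle here beyond \autoref{thm:regularity result} itself; the argument is bookkeeping, and the only point that must be got right is the normalisation, which is precisely what makes the truncation removal cost a universal constant and makes $\norm{u_0}_{L^\infty}$ appear, correctly, only in front of the velocity-field term.
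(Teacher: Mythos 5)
Your proposal is correct and takes essentially the same route as the paper, which derives the corollary simply by invoking \autoref{thm:regularity result} with $q=2$ and leaves the remaining bookkeeping implicit. Your normalisation by $\norm{u_0}_{L^{\infty}}$ (so that the truncation $1\wedge(\cdot)$ is removed at the cost of a universal constant and the correct homogeneity of \eqref{log-sobolev} is preserved), together with the annulus estimates passing between $B_{1/10}$, $B_{1/3}$ and $B_{3/4}$ via the $L^2$ bound, is exactly the standard way to fill in those omitted details.
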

It is worth remarking that \eqref{log-sobolev} does not depend on $\nu>0$, hence the inequality holds even in the case $\nu=0$, 
i.e. for solution of the transport equation \eqref{Tr} (Cf. \cite{BrueNguyen18c,LegerFlavien16}).

Moreover, the following example borrowed from \cite[Theorem 3.2]{BrueNguyen18c}
shows that \autoref{cor:regularity} is sharp, in the sense that $H^{\log, p}$ cannot be replaced with a $H^{\log, q}$ for $q>p$.

\begin{proposition}\label{prop:oldcounterexample}
	Let $p\geq 1$. There exist a divergence free vector field $b\in L^\infty([0,+\infty); W^{1,p}(\mathbb{R}^d))$ and $ u_0\in L^\infty(\setR^d)\cap W^{1,d}(\mathbb{R}^d)$ supported, respectively, in $B_1\times
	[0,+\infty)$ and $B_1$, such that the solution $u\in L^\infty([0,+\infty)\times\setR^d)$ to \eqref{Tr} satisfies 
	\[
	u_t\notin H^{\log, q}
	\quad\text{for any $t>0$ whenever $q>p$.}
	\]
\end{proposition}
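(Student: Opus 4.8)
The statement is the mirror image of \autoref{cor:regularity}: the regularity estimate forbids the transported scalar from being more regular than $H^{\log,p}$, and the proposition exhibits a velocity field and a datum for which this threshold is attained and not improved. The plan is therefore to recall the explicit construction of \cite{BrueNguyen18c} and to verify the failure of $H^{\log,q}$-regularity through the Fourier description \eqref{eq:logSobolevvsFourier}. Throughout, membership in $H^{\log,q}$ is equivalent to the summability of $\sum_{k}\log(2+\abs{k})^{q}\abs{\hat u(k)}^{2}$, so the goal is to engineer a flow that, starting from a smooth datum, drives the transported scalar to a broadband logarithmic spectrum sitting exactly on the boundary between the two classes. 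Concretely, I would aim for a dyadic energy profile $\sigma_j:=\sum_{\abs{k}\sim 2^{j}}\abs{\hat u_t(k)}^2$ with $\sigma_j\sim j^{-p-1}(\log j)^{-2}$, so that $\sum_j j^{p}\sigma_j<\infty$ while $\sum_j j^{q}\sigma_j=+\infty$ for every $q>p$.

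First I would record that the datum causes no obstruction: since $u_0\in W^{1,d}(\setR^d)$ with $d\ge 2$ we have $\nabla u_0\in L^{2}$ on the bounded support of $u_0$, hence $u_0\in H^{1}\subset H^{\log,q}$ for every $q$, and the singular tail is produced entirely by the flow. The engine is a self-similar, space-filling shear cascade of Alberti--Crippa--Mazzucato type: on a dyadic partition of the support, and along a sequence of time windows, a divergence free field refines the oscillation scale of the scalar from $2^{-k}$ to $2^{-(k+1)}$ across the whole generation at once. Acting on an entire dyadic generation simultaneously is what keeps the cost affordable: the $L^{p}$ norm of $\nabla b$ is shared among the cells of a generation, so one can arrange $\sup_t\norm{\nabla b_t}_{L^{p}}<\infty$, i.e.\ $b\in L^{\infty}([0,\infty),W^{1,p})$, while the accumulated refinement across generations transfers a prescribed amount of $L^2$ mass into each frequency shell $2^{j}$ and realizes the target spectrum $\sigma_j$.

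To obtain the conclusion at \emph{every} $t>0$, and not merely at a single final time, I would superpose time-rescaled copies of this cascade on a disjoint family of subregions of $B_1$, with the $m$-th copy completing its refinement by a time $T_m\downarrow 0$; then for any $t>0$ all but finitely many copies are already fully developed, their broadband spectra add up, and $\sum_j j^{q}\sigma_j$ still diverges for $q>p$ after discarding finitely many terms. The main obstacle is precisely the simultaneous bookkeeping: one must choose the generation count, the cell sizes, the amplitudes and the time windows so that the $W^{1,p}$ cost stays uniformly bounded in time, the datum remains in $W^{1,d}\cap L^{\infty}$, and the resulting spectrum is heavy enough to escape every $H^{\log,q}$ with $q>p$ yet light enough to stay in $H^{\log,p}$, as forced by \autoref{cor:regularity}. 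Balancing these competing constraints is delicate, and it is exactly the content of \cite[Theorem 3.2]{BrueNguyen18c}, from which the construction is imported.
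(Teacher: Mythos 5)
Your proposal ultimately rests on importing the construction from \cite[Theorem 3.2]{BrueNguyen18c}, which is exactly what the paper does: it offers no independent proof and simply borrows the example from that reference. The heuristic description of the dyadic spectrum and the time-rescaled superposition is a reasonable gloss, but the substance of your argument coincides with the paper's citation, so the approaches are the same.
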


The remaining part of this section is devoted to the proof of \autoref{thm:regularity result}. The argument is a refinement of the one presented in \cite{BrueNguyen18c} and has its roots in the very influential paper \cite{CrippaDeLellis08}. In a nutshell, it goes as follows. First, by employing the Lusin-Lipschitz inequality for Sobolev maps \eqref{eq:LusinLipschitSobolev} and the Gronwall lemma, one studies regularity properties of the backwards stochastic flow (Cf. \autoref{prop:Lagrangianregularityestimate}) associated to $b$. Next, one translates the Lagrangian regularity result into an Eulerian one by using the representation formula \eqref{eq:stochastic lagrangian representation} and Lusin-type characterisations of $H^{\log, p}$ functions (Cf. \autoref{prop:loglusinliprefined}).

\begin{remark}
	In what follows it is technically convenient to assume that $b\in L^1([0,T],W^{1,p}(\TT^d,\setR^d))$ is pointwise defined, with respect to the space variable, according to
	\begin{equation}\label{eq:bpointwisedefined}
	b(t,x):=
	\begin{dcases}
	\lim_{r\downarrow 0} \frac{1}{\omega_d r^d}\int_{B_r(x)} b(t,y)\di y & \text{whener it exists,}\\
	0 & \text{otherwise.}
	\end{dcases}
	\end{equation}
\end{remark}

\subsection{Stochastic representation and Lagrangian estimate}
For any $t\in (0, \infty)$ we consider the following backward stochastic differential equation
\begin{equation}\label{eq:SDE}
\di X_{t,s} =b(s, X_{t,s})\di s+\sqrt{2\nu} \di W_s
\quad\text{with}\ \quad X_{t,t}(x)=x,
\tag{SDE}
\end{equation}
where $W_s$ is an $\TT^d$ valued Brownian motion adapted to the backwards filtration (i.e. satisfying $W_t=0$) in the probability space $(\Omega, \mathcal{F},\Prob)$.

Then, the Feynman-Kac formula \cite{Kunita97} expresses the solution of \eqref{CE} as
\begin{equation}\label{eq:stochastic lagrangian representation}
u^{\nu}(t,x)=\mathbb{E}\left[u_0\circ X_{t,0}(x)\right].
\end{equation}
Exploiting the Sobolev regularity of $b$ one gets a following Lusin type estimate for the stochastic flow map $X_{s,t}$ that does not depend on $\nu$.
\begin{proposition}\label{prop:Lagrangianregularityestimate}
	Let $b\in L^1([0,T],W^{1,p}(\TT^d,\setR^d))$ be a divergence free vector field, for some $p>1$. Fix $t\in (0,T)$. Then, there exists a nonnegative random function $g_t(\omega,x)=g_t(x)$ for $\omega\in \Omega$ and $x\in \TT^d$, which for $\Prob$-a.e. $\omega$ satisfies the inequalities
	\begin{equation}\label{zz1}
		\norm{g_t}_{L^p(\TT^d)}\lesssim_{p,d}\int_0^t \norm{\nabla b_s}_{L^p(\TT^d)}\di s,
	\end{equation}
	\begin{equation}\label{zz2}
	   e^{-g_t(x)-g_t(y)}\le 
		\frac{|X_{t,s}(x)-X_{t,s}(y)|}{|x-y|}\le e^{g_t(x)+g_t(y)}
		\quad \text{for any $0\le s\le t$, $x,y\in \TT^d$.}
	\end{equation}
	 Here $X_{t,s}$ is a version of the solution to \eqref{eq:SDE}.
\end{proposition}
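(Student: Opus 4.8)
The plan is to define the control function $g_t$ directly as the time-integral of the Hardy--Littlewood maximal function of $\nabla b$ along the stochastic trajectories, and then to establish \eqref{zz1} and \eqref{zz2} separately. Concretely, set
\[
g_t(\omega,x):=C_d\int_0^t M\abs{\nabla b_s}\bigl(X_{t,s}(x)\bigr)\di s,
\]
where $M$ denotes the Hardy--Littlewood maximal operator on $\TT^d$ and $C_d$ is the constant from the Lusin--Lipschitz inequality \eqref{eq:LusinLipschitSobolev}. The structural observation driving everything is that for two starting points $x,y$ the difference $Z_s:=X_{t,s}(x)-X_{t,s}(y)$ obeys a \emph{pathwise} equation: since both trajectories are driven by the \emph{same} Brownian motion $W$, the martingale part $\sqrt{2\nu}\,W_s$ cancels in the difference, leaving
\[
Z_s=(x-y)+\int_t^s\bigl(b(r,X_{t,r}(x))-b(r,X_{t,r}(y))\bigr)\di r.
\]
This cancellation is precisely what will make every constant independent of $\nu$.

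To obtain \eqref{zz2} I would fix a full-measure $\omega$ and estimate the logarithmic derivative of $\abs{Z_s}$. Applying \eqref{eq:LusinLipschitSobolev} in the form $\abs{b_r(\xi)-b_r(\eta)}\le C_d\abs{\xi-\eta}\bigl(M\abs{\nabla b_r}(\xi)+M\abs{\nabla b_r}(\eta)\bigr)$ gives
\[
\Bigl|\didi{s}\log\abs{Z_s}\Bigr|\le C_d\Bigl(M\abs{\nabla b_s}(X_{t,s}(x))+M\abs{\nabla b_s}(X_{t,s}(y))\Bigr).
\]
Integrating from $s$ to $t$ and using $Z_t=x-y$ yields
\[
\Bigl|\log\frac{\abs{X_{t,s}(x)-X_{t,s}(y)}}{\abs{x-y}}\Bigr|\le g_t(x)+g_t(y),
\]
which is exactly the two-sided inequality \eqref{zz2}. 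This is the Gronwall step flagged in the outline preceding the statement; note that the lower bound in \eqref{zz2} also guarantees that $Z_s$ never vanishes, which is what legitimizes differentiating $\log\abs{Z_s}$ in the first place.

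For the $L^p$ bound \eqref{zz1} I would apply Minkowski's integral inequality to pull the $L^p(\TT^d)$ norm inside the time integral,
\[
\norm{g_t}_{L^p}\le C_d\int_0^t\bigl\|\,M\abs{\nabla b_s}(X_{t,s}(\cdot))\,\bigr\|_{L^p(\TT^d)}\di s,
\]
and then exploit two facts. First, since $b$ is divergence free and the noise in \eqref{eq:SDE} is additive (constant diffusion coefficient), the Jacobian of the stochastic flow satisfies $\didi{s}\log\det\nabla X_{t,s}=\div b=0$, so for a.e. $\omega$ the map $x\mapsto X_{t,s}(x)$ is measure preserving; hence the change of variables gives $\bigl\|M\abs{\nabla b_s}(X_{t,s}(\cdot))\bigr\|_{L^p}=\bigl\|M\abs{\nabla b_s}\bigr\|_{L^p}$. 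Second, the maximal operator is bounded on $L^p(\TT^d)$ for $p>1$, so $\bigl\|M\abs{\nabla b_s}\bigr\|_{L^p}\lesssim_{p,d}\norm{\nabla b_s}_{L^p}$. Combining these yields $\norm{g_t}_{L^p}\lesssim_{p,d}\int_0^t\norm{\nabla b_s}_{L^p}\di s$, as claimed.

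The main obstacle is making this pathwise argument rigorous at the low regularity of $b$: the Lusin--Lipschitz inequality holds only at Lebesgue points, so one must argue that the exceptional set is not charged along the trajectories, which is again where measure preservation of the flow enters, and one must handle the SDE well-posedness and the construction of a version $X_{t,s}$ for which the pointwise estimates hold for $\Prob$-a.e.\ $\omega$ simultaneously in $s$. The crucial point throughout is $\nu$-uniformity, and it rests entirely on the noise cancellation in $Z_s$ together with the additive structure that keeps the flow volume preserving; multiplicative noise would destroy both mechanisms at once.
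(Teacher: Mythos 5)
Your proposal is correct and follows essentially the same route as the paper's proof: the same definition of $g_t$ as the time-integral of the Hardy--Littlewood maximal function of $\nabla b$ along the backward stochastic trajectories, the same Lusin--Lipschitz-plus-Gronwall step (you phrase it via the logarithmic derivative of $|Z_s|$, which is an equivalent formulation and gives both sides of \eqref{zz2} at once), and the same Minkowski/measure-preservation/maximal-function argument for \eqref{zz1}. Your explicit remarks on the cancellation of the additive noise and on absorbing the constant $C_d$ into $g_t$ only make explicit what the paper leaves implicit.
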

\begin{proof}
	Let us introduce the local Hardy-littelwood maximal function 
	\begin{equation*}
		M f(x):=\sup_{0<r<3} \frac{1}{\omega_d r^d}\int_{B_r(x)} |f(y)|\di y,
	\end{equation*}
	for $f\in L^1(\TT^d)$, and set
	\begin{equation*}
		g_t(x):=\int_0^t M|\nabla b_s|(X_{t,s}(x))\di s
		\quad\text{for any $x\in \TT^d$,}
	\end{equation*}
	and notice that \eqref{zz1} is a simple consequence of the Minkoski inequality and the fact that $X_{t,s}$ is measure preserving.
	
	The inequality \eqref{zz2} follows from the Gronwall lemma, along with the observation that, $\Prob$-a.e., for any $x,y\in \TT^d$, the map $s\to |X_{t,s}(x)-X_{t,s}(y)|$ is absolutely continuous and satisfies
	\begin{align*}
		\frac{\di}{\di s}|X_{t,s}(x)-&X_{t,s}(y)|\le  |b(s,X_{t,s}(x))-b(s,X_{t,s}(y))|\\
		\le& C_{d}|X_{t,s}(x)-X_{t,s}(y)|\left( M|\nabla b_s|(X_{t,s}(x))+M|\nabla b_s|(X_{t,s}(y)) \right),
	\end{align*}
	for a.e. $s\in (0,t)$.
	Above we have used the the Lusin-Lipschitz inequality for Sobolev functions $f\in W^{1,1}_{\loc}(\TT^d)$, pointwise defined according to \eqref{eq:bpointwisedefined}:
	\begin{equation}\label{eq:LusinLipschitSobolev}
		|f(x)-f(y)|\le C_d|x-y|(M|\nabla f|(x)+M|\nabla f|(y))
		\quad\text{for any $x,y \in \TT^d$}.
	\end{equation}
\end{proof}

\subsection{Lusin type characterisation of $H^{\log,p}$ functions and proof of \autoref{thm:regularity result}}
Let us begin by presenting a refined version of \cite[Theorem 1.11]{BrueNguyen18c}.
\begin{proposition}\label{prop:loglusinliprefined}
	Let $q>0$ and $p>0$. For any $u\in L^1_{\loc}(\setR^d)$ it holds
	\begin{equation*}
		1\wedge | u(y)-u(x)|^q	\lesssim_{p,q,d} \log\left(1/r\right)^{-p}
		\left(G(r,x)+G(r,y)\right),
	\end{equation*}
	for any $x,y\in \setR^d$ with $2|x-y|\le r<\frac{1}{10}$,
	where
	\begin{equation*}
		G(r,z):= \int_{r\leq |h|\leq r^{1/2}} \frac{1\wedge | u(z+h)-u(z)|^q}{|h|^d}\frac{1}{\log(1/|h|)^{1-p}}\di h
		\quad\text{for any $z\in \setR^d$}.
	\end{equation*}
\end{proposition}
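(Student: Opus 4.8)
\emph{The plan} is to read the right-hand side as a weighted average and to prove the estimate by a single chaining step through a common overlap region. I would introduce the measure $\di\mu(h):=\log(1/|h|)^{p-1}\,|h|^{-d}\,\di h$, so that $G(r,z)=\int_{r\le|h|\le r^{1/2}}\bigl(1\wedge|u(z+h)-u(z)|^q\bigr)\,\di\mu(h)$. The first step is the mass estimate $\mu(\{r\le|h|\le r^{1/2}\})\asymp_{p,d}\log(1/r)^p$, which I would verify in polar coordinates via the substitution $s=\log(1/t)$: the radial integral becomes $\int s^{p-1}\,\di s$ over a range comparable to $[\tfrac12\log(1/r),\log(1/r)]$, giving $\asymp_p\log(1/r)^p$. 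The point of the outer radius $r^{1/2}$ is exactly that across this annulus the weight $\log(1/|h|)^{p-1}$ stays comparable to $\log(1/r)^{p-1}$, while $\int_r^{r^{1/2}}\di t/t=\tfrac12\log(1/r)$ supplies the remaining factor of the logarithm. Thus the asserted inequality amounts, up to this constant, to bounding $1\wedge|u(x)-u(y)|^q$ by the $\mu$-averages of the truncated increments of $u$ around $x$ and around $y$.

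The proof itself I would carry out as follows. First I record the elementary quasi-triangle inequality
\[
1\wedge|a-b|^q\le C_q\bigl[(1\wedge|a-c|^q)+(1\wedge|c-b|^q)\bigr],\qquad C_q:=\max(1,2^{q-1}),
\]
valid for all reals (for $q\le1$ from subadditivity of $t\mapsto t^q$ and of $t\mapsto t\wedge1$, for $q>1$ from convexity). Applying it with $a=u(x)$, $b=u(y)$, $c=u(w)$ and integrating in $w$ against $\di\mu(w-x)$ over the overlap region $\Omega_{xy}:=\{w:\tfrac32 r\le|w-x|\le r^{1/2}-\tfrac r2\}$ is the heart of the matter. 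Since $2|x-y|\le r$, the triangle inequality gives $\Omega_{xy}\subset\{r\le|w-x|\le r^{1/2}\}\cap\{r\le|w-y|\le r^{1/2}\}$. Hence the contribution of the first term, after the change of variables $h=w-x$, is bounded by $G(r,x)$; for the second term I change variables $h'=w-y$ and use that $|w-y|\asymp|w-x|$ and $\log(1/|w-y|)\asymp\log(1/|w-x|)$ on $\Omega_{xy}$ (because $|x-y|\le r/2\le\tfrac13|w-x|$), so the weight $\log(1/|w-x|)^{p-1}|w-x|^{-d}$ is comparable to the one defining $G(r,y)$, giving a bound $\lesssim_{p,d}G(r,y)$. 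Dividing by $\mu(\Omega_{xy})\asymp_{p,d}\log(1/r)^p$ then yields the proposition (understood for a.e.\ pair $(x,y)$, equivalently for the precise representative of $u$).

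The one genuinely delicate point, and the reason for chaining through $\Omega_{xy}$ rather than through mean values, is the truncation at $1$. The usual telescoping through ball averages would require controlling $1\wedge\frac{1}{|B|}\int_B f$ by $\frac{1}{|B|}\int_B(1\wedge f)$, an inequality that is false in the needed direction (take $f$ concentrated on a small set). Passing through a common overlap region avoids this completely, since the triangle inequality is then applied \emph{pointwise} in $w$ before any integration and only the subadditive quantity $1\wedge|\cdot|^q$ is ever averaged. The rest is bookkeeping: confirming that $r<\tfrac1{10}$ keeps the annulus $[\tfrac32 r,\,r^{1/2}-\tfrac r2]$ nondegenerate and of full $\mu$-mass $\asymp_{p,d}\log(1/r)^p$, and that the weights at $w-x$ and $w-y$ are comparable, both of which follow from $|x-y|\le r/2$.
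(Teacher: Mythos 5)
Your proposal is correct: every step checks out, including the mass estimate $\mu(\Omega_{xy})\gtrsim_{p,d}\log(1/r)^p$ (which does hold for $r<\tfrac1{10}$ and all $p>0$, even when $p-1<0$) and the comparability of the weights at $w-x$ and $w-y$ on the overlap region. It rests on the same underlying chaining idea as the paper's proof, but is organized genuinely differently, so a comparison is worth recording. The paper proceeds scale by scale: it quotes the annulus-average inequality \eqref{eq: intermediate} from \cite[Lemma 1.12]{BrueNguyen18c}, valid at every scale $s\ge 2|x-y|$, integrates it over $s\in[r,r^{1/2}/3]$ against the kernel $\di s/(s\log(1/s)^{1-p})$, and obtains the factor $\log(1/r)^p$ from the kernel integral on the left and the quantities $G(r,x)$, $G(r,y)$ from a Fubini-type rearrangement on the right. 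You instead do everything in one step: you prove the pointwise quasi-triangle inequality for $1\wedge|\cdot|^q$ from scratch, integrate it in the intermediate point $w$ over a single fattened annulus $\Omega_{xy}$ against $\di\mu(w-x)$, and normalize by $\mu(\Omega_{xy})$. In effect you have inlined the proof of the cited lemma (which is itself a pointwise chaining through the overlap of two annuli) and replaced the integration over scales plus Fubini by the weight-comparability observation. What your route buys is self-containedness and a transparent reading of $\log(1/r)^{-p}$ as $1/\mu(\Omega_{xy})$; what the paper's route buys is brevity, given that the lemma was already available, and a scale-by-scale structure in the style of Crippa--De Lellis arguments \cite{CrippaDeLellis08}, which is the form that recurs elsewhere in that literature. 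One small improvement: your closing caveat ``for a.e.\ pair $(x,y)$'' can be dropped. Since the quasi-triangle inequality is applied at a.e.\ $w$ in a set of positive $\mu$-measure, the conclusion holds for \emph{every} pair $x,y$ once a pointwise representative of $u$ is fixed; this stronger, everywhere form is what is actually needed when the proposition is applied to $u_0$ evaluated along the stochastic flow in the proof of \autoref{thm:regularity result}.
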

\begin{proof}
	First observe that, for any $x, y\in \setR^d$ and $s\geq 2|x-y|$ one has
	\begin{align}\label{eq: intermediate}
	&1\wedge |u(x)-u(y)|^q\notag\\
	&\lesssim_{d,q}
	\dashint_{B_{3s}(0)\setminus B_s(0)} 1\wedge |u(x+h)-u(x)|^q \di h+\dashint_{B_{3s}(0)\setminus B_s(0)} 1\wedge |u(y+h)-u(y)|^q \di h,
	\end{align}
    see \cite[Lemma 1.12]{BrueNguyen18c} for a simple proof.
    Next, we integrate both sides of \eqref{eq: intermediate} with respect to the variable $s$ against a suitable kernel, getting
    \begin{align*}
    1\wedge |u(x)-u(y)|^q\int_{r}^{\frac{r^{1/2}}{3}} &\frac{1}{s\log(1/s)^{1-p}} \di s\\ 
    \lesssim_{d,p}&
    \int_{r}^{\frac{r^{1/2}}{3}}  \dashint_{B_{3r}(0)\setminus B_r(0)} 1\wedge |u(x+h)-u(x)|^q \di h \frac{\di s}{s \log(1/s)^{1-p}}\\
    &+
    \int_{r}^{\frac{r^{1/2}}{3}}  \dashint_{B_{3s}(0)\setminus B_s(0)} 1\wedge |u(y+h)-u(y)|^q \di h \frac{\di s}{s \log(1/s)^{1-p}}.
    \end{align*}
    Observe that
    \begin{equation*}
    \int_{r}^{\frac{r^{1/2}}{3}}  \dashint_{B_{3s}(0)\setminus B_s(0)} 1\wedge |u(x+h)-u(x)|^q \di h \frac{\di s}{s \log(1/s)^{1-p}}
    \lesssim_{d,p} G(r,x),
    \end{equation*}
    and
    \begin{align*}
    \int_{r}^{\frac{r^{1/2}}{3}} \frac{1}{s\log(1/s)^{1-p}} \di s &
    =\frac{1}{p}\left( \log\left(\frac{1}{r}\right)^{p}-\log\left(\frac{3}{r^{1/2}}\right)^{p} \right)
    \gtrsim_{p}
    \log\left(\frac{1}{r}\right)^{p},
    \end{align*}
    where we have used $r<\frac{1}{10}$.
    The proof is complete.
\end{proof}

\begin{proof}[Proof of \autoref{thm:regularity result}]
	Let us begin by noticing that our conclusion follows from the $\Prob$-a.e. inequality
	\begin{align}\label{zz6}
		&	\int_{B_{1/5}} \int_{\TT^d} \frac{1\wedge | u_0(X_{t,0}(x+h))-u_0(X_{t,0}(x))|^q}{|h|^d}\frac{1}{\log(1/|h|)^{1-p}} \di x \di h\\& \lesssim_{p,q,d}\left(\int_0^t \norm{\nabla b_s}_{L^p} \di s\right)^p+	\int_{B_{3/4}} \int_{\TT^d} \frac{1\wedge | u_0(x+h)-u_0(x)|^q}{|h|^d}\frac{1}{\log(1/|h|)^{1-p}} \di x \di h,	\notag
	\end{align}
	by taking the expectation and using \eqref{eq:stochastic lagrangian representation}.
	
	Let us then prove \eqref{zz6}. Fix $t\in (0,T)$ and $g$ given by \autoref{prop:Lagrangianregularityestimate}, in order to keep notation short we drop the dependence of $g$ on $\omega$ and $t$. For $\Prob$-a.e. $\omega$ we have
	\begin{align*}
	\int&\int_{|h|<\frac{1}{10}}\frac{1\wedge | u_0(X_{t,0}(x+h))-u_0(X_{t,0}(x))|^q}{|h|^d\log(1/|h|)^{1-p}}\di h\di x\\
	\leq &  	\int\int_{|h|<\frac{1}{10}}\mathbf{1}_{|h|^{1/2}\exp\left\lbrace g(x+h)+g(x)\right\rbrace \geq 1}\, \frac{1}{|h|^d\log(1/|h|)^{1-p}}\di h\di x\\
	&+\int\int_{|h|<\frac{1}{10}}\mathbf{1}_{|h|^{1/2}\exp\left\lbrace g(x+h)+g(x)\right\rbrace < 1}\frac{1\wedge | u_0(X_{t,0}(x+h))-u_0(X_{t,0}(x))|^q}{|h|^d\log(1/|h|)^{1-p}}\di h\di x\\
	=:& I + II.
	\end{align*}
	Let us estimate $I$ by means of \eqref{zz1}:
	\begin{align*}
	I  \le & \int\int_{|h|<\frac{1}{10}}\mathbf{1}_{|h|^{1/2}\exp\left\lbrace g(x+h)+g(x)\right\rbrace \geq 1}\frac{1}{|h|^d\log(1/|h|)^{1-p}}\di h\di x\\
	\le & 2\int\int_{|h|<\frac{1}{10}}\mathbf{1}_{|h|^{1/2} e^{2g(x)} \geq 2}\frac{1}{|h|^d\log(1/|h|)^{1-p}}\di h\di x\\
	=& 2\int_{h<\frac{1}{10}} \leb^d\left( \set{g\ge \frac{1}{4}\log(4/|h|)}\right)\frac{1}{|h|^d\log(1/|h|)^{1-p}}\di h\\
	\lesssim_d &\int_{r<\frac{1}{10}} \leb^d\left( \set{g\ge \frac{1}{4}\log(4/r)}\right)\log(1/r)^{p-1}\frac{\di r}{r}\\
	\le & 4\int_{\log(40)/4}^{\infty}\leb^d\left( \set{g\ge \lambda}\right)(4\lambda-2\log(2))^{p-1}d\lambda\\ 
	\lesssim_{p}& \norm{g}_{L^p}^p\lesssim_{p,d} \left( \int_0^T \norm{\nabla b_s }_{L^p}\di s\right)^p.
	\end{align*}
	Let us now estimate $II$. Let $G$ be given by \autoref{prop:loglusinliprefined} and associated to $u_0$, we have
	\begin{align}\nonumber
	1\wedge & | u_0(X_{t,0}(x+h))- u_0(X_{t,0}(x))|^q\\&\lesssim \log\left (1/r\right )^{-p} (G(r, X_{t,0}(x+h))+G(r,X_{t,0}(x))) ,
	\label{zz3}
	\end{align}
	with 
	\begin{align*}
	r:= \frac{1}{20}\wedge |X_{t,0}(x+h)-X_{t,0}(x)|.
	\end{align*}
	Note that, by \autoref{prop:Lagrangianregularityestimate} we have 
	\begin{align}\label{Z1}
	\frac{1}{20}\wedge \left[ |h|\exp\left\lbrace-g(x+h)-g(x)\right\rbrace\right]\leq r \leq  \frac{1}{20}\wedge \left[ |h|\exp\left\lbrace g(x+h)+g(x)\right\rbrace\right].
	\end{align}
	Let us fix $h\in B_{\frac{1}{10}}(0)$. For any $x\in\TT^d$ such that
	$$|h|^{1/2}\exp\left\lbrace g(x+h)+g(x)\right\rbrace <1,$$
	it follows from \eqref{zz3} and \eqref{Z1} that 
	$|h|^{3/2}\leq  r \leq |h|^{1/2},$
	and 
    \begin{align*}
    1\wedge & | u_0(X_{t,0}(x+h))- u_0(X_{t,0}(x))|^q\\&\lesssim \log\left (\frac{1}{|h|}\right )^{-p} (H(|h|, X_{t,0}(x+h))+H(|h|,X_{t,0}(x))),
    \end{align*}
    where 
    \begin{equation*}
    H(r,z):= \int_{ r^{3/2}\leq |h|\leq  r^{1/4}} \frac{1\wedge | u_0(z+h)-u_0(z)|^q}{|h|^d}\frac{1}{\log(1/|h|)^{1-p}}\di h
    \quad\text{for any $z\in \TT^d$}.
    \end{equation*}
	This implies, 
	\begin{align*}
	II\lesssim_{p} &  \int\int_{|h|<\frac{1}{10}}\frac{H(|h|, X_{t,0}(x+h))}{|h|^d\log(1/|h|)}\di h\di x
	+\int\int_{|h|<\frac{1}{10}}\frac{H(|h|, X_{t,0}(x))}{|h|^d\log(1/|h|)}\di h\di x
	\\=& 2\int\int_{|h|<\frac{1}{10}}\frac{H(|h|, X_{t,0}(x))}{|h|^d\log(1/|h|)}\di h\di x
	\\	\simeq_{p,d} & \int\int_0^{\frac{1}{10}}\frac{1}{r\log(1/r)} \int_{r^{3/2}\leq |h|\leq  r^{1/4}} \frac{1\wedge | u_0(x+h)-u_0(x)|^q}{|h|^d\log(1/|h|)^{1-p}}\di h\di r\di x
	\\	\lesssim_{p,d} & \int\int_{|h|<3/4}\left(\int_{|h|^4}^{|h|^{2/3}}\frac{1}{r\log(1/r)} \di r\right) \frac{1\wedge | u_0(x+h)-u_0(x)|^q}{|h|^d\log(1/|h|)^{1-p}}\di h\di x
	\\	\simeq_{p,d} & \int\int_{|h|<3/4} \frac{1\wedge | u_0(x+h)-u_0(x)|^q}{|h|^d}\frac{1}{\log(1/|h|)^{1-p}}\di h\di x,
	\end{align*} 
	here we have used the fact that 
	\begin{equation*}
	\int_{|h|^4}^{|h|^{2/3}}\frac{1}{r\log(1/r)} \di r =\log(\log(1/|h|^4))-\log(\log(1/|h|^{2/3}))=\log(6).
	\end{equation*}
	The proof is over.
\end{proof}

\begin{remark}
	Notice that \eqref{zz6} is stronger than the regularity estimate in \eqref{thm:regularity result}, indeed when we take the expectation we are losing information.
	We believe that a more precise analysis, which do not lose this information,  could lead to the following improved version of \eqref{cor:regularity}:
	\begin{equation}\label{zzzz1}
	[u_t]_{H^{\log,p}}^2 + \nu \int_0^t[\nabla u_s]_{H^{\log,p}}^2 \di s
	\lesssim_{d,p}
	 \left(\int_0^t\norm{\nabla b_s}_{L^p}\right)^{p}\norm{u_0}_{L^{\infty}}^2+	\norm{u_0}_{H^{\log,p}}^2
	\quad \text{$\forall t\in [0,T]$.}
	\end{equation}
	Unfortunately we are not able to show this estimate by means of our approach. However it is worth stressing that if \eqref{zzzzz1} were true then it would lead to significant improvements of \autoref{prop: dissipation estimate}, \autoref{th:solutionsslowdissipation} and their applications. 
\end{remark}

\section{Proof of \autoref{th:solutionsslowdissipation}: existence of slow dissipating solutions}
\label{sec:slow dissipating solutions}

The core of the argument in the proof of \autoref{th:solutionsslowdissipation} is the following.

\begin{proposition}\label{thm:slowly dissipating solutions}
	Let $b\in L^1([0,T],W^{1,p}(\TT^d,\setR^d))$ be a divergence free vector field, for some $p>2$. Let $u^{\nu}$ and $u^0$ solve, respectively, \eqref{CE} and \eqref{Tr}. For any $t\in [0,T]$,
	if there exists $q>0$ such that
	\begin{equation}\label{zzz9}
	\limsup_{\nu\downarrow 0} 
	\,  \log(1/\nu)^q \,
	\nu \int_0^t \norm{\nabla u^{\nu}_s}_{L^2}^2\di s <\infty,
	\end{equation}
	then $u_t\in H^{\log ,r}$ (see \eqref{eq:logSobolevsemonorm}) for any $0 < r < q \frac{p-2}{p-1}$.
\end{proposition}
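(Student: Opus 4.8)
The plan is to deduce the regularity of the transport solution $u^0_t$ from the small-dissipation hypothesis by comparing, frequency block by frequency block, $u^0$ with the family $u^\nu$, for which \autoref{cor:regularity} provides a bound that is uniform in $\nu$. Since $W^{1,2}(\TT^d)\hookrightarrow H^{\log,p}$ and $u_0\in L^\infty$, \autoref{cor:regularity} with $\nu=0$ already gives $u^0_t\in H^{\log,p}$; hence only the range $r>p$ is at stake, and the whole point is to upgrade the exponent from $p$ up to $q\frac{p-2}{p-1}$. Throughout I use the Fourier characterization \eqref{eq:logSobolevvsFourier}, writing $\|P_j f\|_{L^2}^2=\sum_{|k|\sim 2^j}|\hat f(k)|^2$ for the Littlewood--Paley blocks, so that membership in $H^{\log,r}$ amounts to summability of $\sum_j \log(2+2^j)^r\|P_j f\|_{L^2}^2$. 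I first bound the time integral $\int_0^t[u^0_s]_{H^{\log,r}}^2\,\di s$, transferring the conclusion to the fixed time $t$ only at the end.

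Two quantitative inputs drive the argument. The first is a dissipation tail bound: from \eqref{eq:logSobolevvsFourier} and $|k|\sim 2^j$,
\[
\int_0^t\|P_j u^\nu_s\|_{L^2}^2\,\di s \;\lesssim\; 2^{-2j}\,\nu^{-1}\Big(\nu\int_0^t\|\nabla u^\nu_s\|_{L^2}^2\,\di s\Big),
\]
so that the energy dissipation directly controls the high-frequency content of $u^\nu$, integrated in time. The second, and decisive, input is a \emph{dissipation-dependent} comparison estimate between the diffusive and the transport solution,
\[
\sup_{s\in[0,t]}\|u^\nu_s-u^0_s\|_{L^2}^2 \;\lesssim\; \Big(\nu\int_0^t\|\nabla u^\nu_\tau\|_{L^2}^2\,\di\tau\Big)^{\frac{p-2}{p-1}} + (\text{terms polynomially small in }\nu).
\]
This is the quantitative vanishing-viscosity estimate underlying \autoref{th:vanishing viscosity}, but phrased in terms of the \emph{actual} dissipation rather than of its a priori bound, so that a smaller dissipation forces a faster rate of convergence.

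The core is then a block-by-block optimization. For each large $j$ I choose $\nu_j=2^{-2j}$ and estimate $\|P_j u^0_s\|_{L^2}\le \|P_j u^{\nu_j}_s\|_{L^2}+\|u^{\nu_j}_s-u^0_s\|_{L^2}$. Squaring, integrating in $s$, and inserting the two displays together with the hypothesis $\nu\int_0^t\|\nabla u^\nu_s\|_{L^2}^2\,\di s\le C\log(1/\nu)^{-q}$ gives, since $2^{-2j}\nu_j^{-1}=1$,
\[
\int_0^t\|P_j u^0_s\|_{L^2}^2\,\di s \;\lesssim\; \log(1/\nu_j)^{-q} + t\,\log(1/\nu_j)^{-q\frac{p-2}{p-1}} \;\lesssim\; j^{-q\frac{p-2}{p-1}},
\]
because $\log(1/\nu_j)\sim j$ and $q\frac{p-2}{p-1}<q$. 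Summing against the weight $\log(2+2^j)^r\sim j^r$ yields $\int_0^t[u^0_s]_{H^{\log,r}}^2\,\di s<\infty$ for every $r<q\frac{p-2}{p-1}$ (the cut-offs $\nu_j$ and the block ranges must be tuned with some care to reach the full open range up to the endpoint). In particular $u^0_s\in H^{\log,r}$ for almost every $s\in[0,t]$.

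Finally, to reach the fixed time $t$, I use that the hypothesis holds for every $t'\le t$ (the integrand is nonnegative), select a sequence $s_n\uparrow t$ with $u^0_{s_n}\in H^{\log,r}$ and $\sup_n[u^0_{s_n}]_{H^{\log,r}}<\infty$, and pass to the limit: as $s\mapsto u^0_s$ is $L^2$-continuous and $[\,\cdot\,]_{H^{\log,r}}$ is lower semicontinuous under weak $L^2$ convergence, $[u^0_t]_{H^{\log,r}}\le\liminf_n[u^0_{s_n}]_{H^{\log,r}}<\infty$. The main obstacle is the second display: establishing the comparison estimate with the sharp exponent $\frac{p-2}{p-1}$ and, crucially, with genuine dependence on the dissipation (so that the factor $\log(1/\nu_j)^{-q(p-2)/(p-1)}$ actually appears). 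This is where the $\nu$-uniform $H^{\log,p}$ regularity of \autoref{cor:regularity} and the a priori bound on $\nu^2\int_0^t\|\Delta u^\nu_s\|_{L^2}^2\,\di s$ in terms of the dissipation rate (\autoref{prop: laplacianbound}) have to be interpolated; a naive energy/mollification estimate only produces the $\nu$-independent rate $\log(1/\nu)^{-(p-2)}$ and misses the dissipation dependence that carries the parameter $q$.
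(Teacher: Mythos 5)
Your overall strategy coincides with the paper's: compare $u^0$ with $u^{\nu}$ at a scale matched to $\nu$, feed in a dissipation-dependent bound coming from \autoref{prop: laplacianbound}, and convert the resulting decay rate into $H^{\log,r}$ membership via \eqref{eq:logSobolevvsFourier} (the paper mollifies at scale $\eps=\nu$ where you use Littlewood--Paley blocks). In particular, the estimate you flag as the ``main obstacle'' is not one: subtracting the two equations and testing against $u^{\nu}-u^{0}$ gives $\sup_{s\le t}\norm{u^{\nu}_s-u^0_s}_{L^2}^2\le t\,\nu^2\int_0^t\norm{\Delta u^{\nu}_s}_{L^2}^2\di s$ (this is \eqref{zzz5}), and inserting \autoref{prop: laplacianbound} with $\gamma=\infty$ yields precisely your comparison estimate, with exponent $\tfrac{p-2}{p-1}$ applied to the \emph{actual} dissipation; this is \eqref{zzz6} in the paper. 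So that input is a two-line consequence of results you already cite, no further interpolation with \autoref{cor:regularity} needed.

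The genuine gaps are elsewhere, and there are two. First, the summation as written does not give the stated range: with blocks $|k|\sim 2^j$, weight $\log(2+2^j)^r\sim j^r$, and the per-block bound $\int_0^t\norm{P_j u^0_s}_{L^2}^2\di s\lesssim j^{-q\frac{p-2}{p-1}}$, the series $\sum_j j^{\,r-q\frac{p-2}{p-1}}$ converges only for $r<q\frac{p-2}{p-1}-1$; you lose a full unit of the exponent, not just the endpoint. The repair is to group frequencies log-dyadically, $\log|k|\sim 2^m$, choose a single $\nu_m$ with $\log(1/\nu_m)\sim 2^m$ per group, and use orthogonality of the blocks so the comparison term $\sup_s\norm{u^{\nu_m}_s-u^0_s}_{L^2}^2$ is paid once per group rather than once per $j$; then $\sum_m 2^{m(r-q\frac{p-2}{p-1})}<\infty$ exactly for $r<q\frac{p-2}{p-1}$. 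The paper's integration of the mollification error against $\frac{\di\nu}{\nu\log(1/\nu)^{1-r}}$ is the continuous version of this grouping. Second, and more seriously, your passage to the fixed time $t$ fails: from $\int_0^t[u^0_s]_{H^{\log,r}}^2\di s<\infty$ you cannot extract $s_n\uparrow t$ with $\sup_n[u^0_{s_n}]_{H^{\log,r}}<\infty$. The integrand could behave like $(t-s)^{-1/2}$, which is integrable yet blows up along \emph{every} sequence $s_n\uparrow t$, so the lower-semicontinuity step has nothing to work with; and propagation of regularity cannot rescue you from a.e.\ times to time $t$, since $H^{\log,r}$ with $r>p$ is exactly the regularity that \autoref{prop:oldcounterexample} shows is not propagated. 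The cure is to avoid time integration altogether: \autoref{prop: laplacianbound} also bounds the gradient at the fixed time, $\nu\norm{\nabla u^{\nu}_t}_{L^2}^2\le\nu\norm{\nabla u_0}_{L^2}^2+C\,t^{\frac{1}{p-1}}\norm{\nabla b}_{L^\infty_tL^p_x}^{\frac{p}{p-1}}\bigl(\nu\int_0^t\norm{\nabla u^{\nu}_s}_{L^2}^2\di s\bigr)^{\frac{p-2}{p-1}}$, so $\norm{P_j u^{\nu_j}_t}_{L^2}^2\le 2^{-2j}\norm{\nabla u^{\nu_j}_t}_{L^2}^2$ is controlled pointwise in time at the cost of one factor $\nu_j^{-1}$; this is exactly how \eqref{zzz2} enters the paper's proof, and with it your whole block argument runs directly at time $t$ with no limiting procedure.
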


\begin{remark}
	By exploiting the ideas developed in the proof of \autoref{thm:slowly dissipating solutions} (Cf. \autoref{remark:Sobolev}) one can prove the following variant:
	if there exists $\theta\in (0,1]$ such that
	\begin{equation}\label{zzz10}
	\limsup_{\nu\downarrow 0} 
	\, 
	\nu^{1-\theta} \int_0^t \norm{\nabla u^{\nu}_s}_{L^2}^2\di s <\infty,
	\end{equation}
	then $u_t\in H^{r}(\TT^d)$ for any $0 < r < \theta \frac{p-2}{2(p-1)}$.
	Here $H^{r}(\TT^d):=\set{u\in L^2(\TT^d): [u]_{H^{r}}<\infty}$ denotes the fractional Sobolev space defined by means of the Gagliardo semi-norm
	\begin{equation*}
		[u]_{H^{r}}^2:=\int_{B_2} \int_{\TT^d} \frac{|u(x+h)-u(x)|^2}{|h|^{d + 2r}}\di x \di h.
	\end{equation*}
\end{remark}

\begin{proof}[Proof of \autoref{th:solutionsslowdissipation} given \autoref{thm:slowly dissipating solutions}]

 We argue by contradiction. If the conclusion were false then the assumptions of \autoref{thm:slowly dissipating solutions} are satisfied for some $q>p\frac{p-1}{p-2}$, therefore there exists $r>p$ such that $u^0_t\in H^{\log, r}$. This is not possible in general in view of \autoref{prop:oldcounterexample}.
\end{proof}

\subsection{Interpolation estimate} 
In this subsection we present an estimate on $\nu^2\int_0^t \norm{\Delta u^\nu_s}_{L^2}^2\di s$, which plays a central role in \autoref{thm:slowly dissipating solutions} and \autoref{prop: dissipation estimate}. 
\begin{proposition}\label{prop: laplacianbound}
	Let $\gamma\in (2,+\infty]$ be fixed. Assume $b\in L^{\infty}([0,T],W^{1,p}(\TT^d,\setR^d))$ for some $p>\frac{2\gamma}{\gamma-2}$.
	Any solution $u^{\nu}\in L^{\infty}([0,T], W^{1,2}(\TT^d)\cap L^{\gamma})$ to \eqref{CE} satisfies
	\begin{align}\label{z1}
	\nu\norm{\nabla u^{\nu}_t}_{L^2}^2+ & \nu^2\int_0^t \norm{\Delta u^\nu_s}_{L^2}^2\di s \notag\\	
	\le& \nu \norm{\nabla u_0}_{L^2}^2+C_{d,p,\gamma}\norm{u_0}_{L^{\gamma}}^{2(1-\beta)}\norm{\nabla b}_{L^\infty_tL^p_x}^{2-\beta}t^{1-\beta}\left(\nu\int_0^t \norm{\nabla u^{\nu}_s}^2_{L^2}\di s\right)^\beta,
	\end{align}
	where
	\begin{equation*}
		\beta=1-\frac{1}{p-1-\frac{2p}{\gamma}}\in (0,1).
	\end{equation*}
\end{proposition}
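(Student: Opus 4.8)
The plan is to test equation \eqref{CE} with $-\Delta u^\nu$ and convert the resulting identity into \eqref{z1} by means of a three–term interpolation. First I would record the differential identity obtained by multiplying \eqref{CE} by $-\nu\Delta u^\nu_s$ and integrating in space: since $\int \partial_s u^\nu_s\,(-\Delta u^\nu_s)\di x = \tfrac12\frac{\di}{\di s}\norm{\nabla u^\nu_s}_{L^2}^2$, one gets
\begin{equation*}
\frac{\nu}{2}\frac{\di}{\di s}\norm{\nabla u^\nu_s}_{L^2}^2 + \nu^2\norm{\Delta u^\nu_s}_{L^2}^2 = \nu\int_{\TT^d}(b_s\cdot\nabla u^\nu_s)\,\Delta u^\nu_s\di x.
\end{equation*}
The key algebraic simplification is that, because $\div b=0$, an integration by parts gives $\int(b\cdot\nabla u)\Delta u\di x=-\int \partial_i b_j\,\partial_i u\,\partial_j u\di x$ (the Hessian contribution is $\tfrac12\int b\cdot\nabla|\nabla u|^2=-\tfrac12\int(\div b)|\nabla u|^2=0$), whence $\abs{\int(b_s\cdot\nabla u^\nu_s)\Delta u^\nu_s\di x}\le \norm{\nabla b_s}_{L^p}\norm{\nabla u^\nu_s}_{L^{2p/(p-1)}}^2$ by Hölder. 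Integrating the identity on $[0,t]$ then isolates $\tfrac{\nu}{2}\norm{\nabla u^\nu_t}_{L^2}^2+\nu^2\int_0^t\norm{\Delta u^\nu_s}_{L^2}^2\di s$ against $\tfrac{\nu}{2}\norm{\nabla u_0}_{L^2}^2$ plus this error.

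The heart of the argument is a dimension–free three–term Gagliardo–Nirenberg inequality
\begin{equation*}
\norm{\nabla u}_{L^{2p/(p-1)}}^2 \lesssim_{d,p,\gamma} \norm{\Delta u}_{L^2}^{2a_1}\norm{\nabla u}_{L^2}^{2a_2}\norm{u}_{L^\gamma}^{2a_3},
\end{equation*}
with exponents fixed by scalar homogeneity $a_1+a_2+a_3=1$, spatial scaling $\tfrac{p-1}{2p}=\tfrac{a_1+a_2}{2}+\tfrac{a_3}{\gamma}$, and the extra requirement $2a_1+a_2=1$ dictated by the $\nu$–splitting below. Solving gives $a_3=a_1=\tfrac{\gamma}{p(\gamma-2)}$ and $a_2=1-2a_1$; note that $a_2>0$ is exactly the standing hypothesis $p>\tfrac{2\gamma}{\gamma-2}$ and that $a_1<\tfrac12$. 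I would prove this interpolation by iterating two classical two–term inequalities (the exponents are $d$–free even though the constant is not). The crucial point is the identity $2a_1+a_2=1$, which forces the diffusivity to split perfectly as $\nu\,\norm{\Delta u^\nu_s}_{L^2}^{2a_1}\norm{\nabla u^\nu_s}_{L^2}^{2a_2}=\big(\nu^2\norm{\Delta u^\nu_s}_{L^2}^2\big)^{a_1}\big(\nu\norm{\nabla u^\nu_s}_{L^2}^2\big)^{a_2}$, making the two coercive quantities appear with their correct powers of $\nu$.

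With the interpolation in hand I would use $\norm{u^\nu_s}_{L^\gamma}\le\norm{u_0}_{L^\gamma}$ (the $L^\gamma$ norm is non–increasing for divergence–free advection–diffusion, via testing \eqref{CE} with $|u^\nu|^{\gamma-2}u^\nu$), pull out $\norm{\nabla b}_{L^\infty_tL^p_x}$, and apply Hölder in time with exponents $\tfrac1{a_1},\tfrac1{a_2},\tfrac1{a_3}$ (the third factor being the constant $1$, producing $t^{a_3}$), obtaining
\begin{equation*}
\nu\int_0^t\abs{\int(b_s\cdot\nabla u^\nu_s)\Delta u^\nu_s\di x}\di s \lesssim \norm{\nabla b}_{L^\infty_tL^p_x}\norm{u_0}_{L^\gamma}^{2a_3}t^{a_3}\Big(\nu^2\!\int_0^t\!\norm{\Delta u^\nu_s}_{L^2}^2\di s\Big)^{a_1}\Big(\nu\!\int_0^t\!\norm{\nabla u^\nu_s}_{L^2}^2\di s\Big)^{a_2}.
\end{equation*}
Since $a_1<1$, Young's inequality then absorbs the factor $\big(\nu^2\int_0^t\norm{\Delta u^\nu_s}_{L^2}^2\big)^{a_1}$ into the left–hand side, leaving a remainder proportional to $\big(\nu\int_0^t\norm{\nabla u^\nu_s}_{L^2}^2\big)^{a_2/(1-a_1)}$. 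A direct computation shows $\beta:=\tfrac{a_2}{1-a_1}=1-\tfrac1{p-1-2p/\gamma}$, that the surviving power of $\norm{\nabla b}$ is $\tfrac1{1-a_1}=2-\beta$, and that the powers of $\norm{u_0}_{L^\gamma}$ and $t$ are $\tfrac{2a_3}{1-a_1}=2(1-\beta)$ and $\tfrac{a_3}{1-a_1}=1-\beta$ (using $a_3=a_1$), matching \eqref{z1} exactly.

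The main obstacle is pinning down the interpolation with these precise exponents and verifying the algebraic coincidences $2a_1+a_2=1$ and $a_3=a_1$, which are what make the powers of $\nu$, $t$, $\norm{u_0}_{L^\gamma}$ and $\norm{\nabla b}$ fall into place; everything downstream (the $L^\gamma$ bound, Hölder, Young) is routine bookkeeping. A secondary technical point is justifying the integration by parts against $\Delta u^\nu$, which I would handle by working first with the smooth solutions afforded by parabolic regularity for $\nu>0$ and passing to the limit, or through a standard mollification argument.
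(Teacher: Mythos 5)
Your proposal is correct and takes essentially the same approach as the paper: test with $\Delta u^\nu$, use the divergence-free structure plus H\"older to bound the advection term by $\norm{\nabla b_s}_{L^p}\norm{\nabla u^\nu_s}_{L^{2p/(p-1)}}^2$, interpolate that norm against $\norm{\Delta u^\nu_s}_{L^2}$, $\norm{\nabla u^\nu_s}_{L^2}$, $\norm{u^\nu_s}_{L^\gamma}$ (the paper composes Lebesgue interpolation with the two-term Gagliardo--Nirenberg inequality $\norm{\nabla u}_{L^{2q}}\lesssim \norm{\Delta u}_{L^2}^{1/2}\norm{u}_{L^\gamma}^{1/2}$, $1/q=1/2+1/\gamma$, which is precisely your ``iterated two-term'' construction of the three-term inequality), use the $L^\gamma$ decay, and close with Young and H\"older in time. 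The only cosmetic differences are that the paper normalizes $\norm{\nabla b}_{L^\infty_t L^p_x}=1$ by scaling and applies Young pointwise in $s$ before Jensen in time, whereas you apply three-exponent H\"older in time before Young; your exponents $a_1=a_3=\frac{\gamma}{p(\gamma-2)}$, $a_2=1-2a_1$ reproduce the paper's $\alpha$ and $\beta$ exactly.
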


In the sequel we will use \eqref{z1} just in the case $\gamma=\infty$.
\begin{proof}
It is enough to prove the result for $\norm{\nabla b}_{L^\infty_tL^p_x}=1$, the general case follows by a simple scaling argument. 
Testing \eqref{CE} against $\Delta u_t$ we get
\begin{align*}
\norm{\nabla u^{\nu}_t}_{L^2}^2+2\nu\int_0^t \norm{\Delta u^\nu_s}_{L^2}^2\di s 
&\le \norm{\nabla u_0}_{L^2}^2+\int_0^t\norm{\nabla u^\nu_s}_{L^{2p'}}^2\di s\\
 &\le \norm{\nabla u_0}_{L^2}^2 +\int_0^t\norm{\nabla u^\nu_s}_{L^{2}}^{2\alpha}\norm{\nabla u^\nu_s}_{L^{2q}}^{2(1-\alpha)}\di s,
\end{align*}
with 
\begin{equation}\label{zz4}p'=\frac{p}{p-1},~~
\frac{1}{p'}=\alpha+\frac{1-\alpha}{q},\quad \alpha\in (0,1).
\end{equation}
By using the Gagliardo–Nirenberg interpolation inequality we deduce
\begin{equation}\label{zz5}
\norm{\nabla u^{\nu}}_{L^{2q}}
\le C_{d,q,\gamma} \norm{\Delta u^{\nu}}_{L^2}^{1/2}\norm{u^{\nu}}_{L^\gamma}^{1/2}
\quad \text{for}\quad \frac{1}{q}=\frac{1}{2}+\frac{1}{\gamma},
\end{equation}
hence
\begin{align*}
\norm{\nabla u^{\nu}_t}_{L^2}^2+2\nu\int_0^t \norm{\Delta u^\nu_s}_{L^2}^2\di s 
&\le \norm{\nabla u_0}_{L^2}^2+C_{d,q,\gamma}\norm{u_0}_{L^{\gamma}}^{1-\alpha}\int_0^t\norm{\nabla u^{\nu}_s}_{L^2}^{2\alpha}\norm{\Delta u^{\nu}_s}_{L^2}^{1-\alpha}\di s\\
&\le \norm{\nabla u_0}_{L^2}^2+C_{d,q,\gamma,\alpha}\norm{u_0}_{L^{\gamma}}^{2\frac{1-\alpha}{1+\alpha}}\nu^{-\frac{1-\alpha}{1+\alpha}}\int_0^t\norm{\nabla u^{\nu}_s}_{L^2}^{\frac{4\alpha}{1+\alpha}}\di s\\&+\nu \int_0^t \norm{\Delta u^\nu_s}_{L^2}^2\di s,
\end{align*}
that amounts to
\begin{align*}
\norm{\nabla u^{\nu}_t}_{L^2}^2+\nu\int_0^t \norm{\Delta u^\nu_s}_{L^2}^2\di s 	
&\le \norm{\nabla u_0}_{L^2}^2+C_{d,q,\gamma,\alpha}\norm{u_0}_{L^{\gamma}}^{2\frac{1-\alpha}{1+\alpha}}\nu^{-\frac{1-\alpha}{1+\alpha}}\int_0^t\norm{\nabla u^{\nu}_s}_{L^2}^{\frac{4\alpha}{1+\alpha}}\di s\\
&\le \norm{\nabla u_0}_{L^2}^2+C_{d,q,\gamma,\alpha}\norm{u_0}_{L^{\gamma}}^{2\frac{1-\alpha}{1+\alpha}} t\nu^{-1}\left( t^{-1}\nu\int_0^t \norm{\nabla u_s^{\nu}}^2_{L^2}\di s\right)^{\frac{2\alpha}{1+\alpha}}.
\end{align*}
In order to conclude the proof we just need to combine \eqref{zz4} and \eqref{zz5} to find the expression of $\alpha$ and $q$ in terms of $p$ and $\gamma$.
\end{proof}

\subsection{Proof of \autoref{thm:slowly dissipating solutions}}
\label{subsection:proofofslowlydissipating}
Fix $t\in (0,T)$ and a convolution kernel $\rho_\eps(x):=\eps^{-d}\rho(x\eps^{-1})$ where $\rho\in C^{\infty}_c(\setR^d)$ and $\eps>0$. For any $f\in L^1(\TT^d)$ we denote by
\begin{equation*}
	f\ast\rho_\eps(x):=\int_{\setR^d} u(x-y)\rho_{\eps}(y)\di y
\end{equation*}
its convolution against $\rho_{\eps}$, which is continuous and $1$-periodic.
Then, for any $\nu>0$, it holds
\begin{align}\nonumber
	\norm{u^0_t\ast\rho_{\eps}-u^0_t}_{L^2}
	\le & \norm{u^0_t\ast\rho_{\eps}-u^{\nu}_t\ast\rho_{\eps}}_{L^2}
	+\norm{u_t^{\nu}\ast\rho_{\eps}-u_t^{\nu}}_{L^2}+\norm{u_t^{\nu}-u_t^0}_{L^2}\\
	\le & 2\norm{u_t^{\nu}-u_t^0}_{L^2}+\norm{u_t^{\nu}\ast\rho_{\eps}-u_t^{\nu}}_{L^2}\notag\\
	\lesssim &  \norm{u_t^{\nu}-u_t^0}_{L^2}+\eps\norm{\nabla u^{\nu}_t}_{L^2}.
	\label{Z11}
\end{align}
From \eqref{zzz5} and \autoref{prop: laplacianbound} (with $\gamma=\infty$) we get
\begin{equation}\label{zzz6}
\norm{u_t^\nu-u_t^0}^2_{L^2}
\lesssim_p  t\nu \norm{\nabla u_0}_{L^2}^2 +t^{\frac{p}{p-1}}
\norm{u_0}_{L^2}^{\frac{2}{p-1}}
\norm{\nabla b}_{L^\infty_tL^p_x}^{\frac{p}{p-1}}
\left(\nu\int_0^t \norm{\nabla u_s^{\nu}}^2_{L^2}\di s\right)^{\frac{p-2}{p-1}},
\end{equation}
while \autoref{prop: laplacianbound} and \eqref{eq:energybalance} yield
\begin{equation}\label{zzz2}
\eps^2 \norm{\nabla u_t^{\nu}}_{L^2}^2\lesssim_{p}
 \eps^2 \norm{\nabla u_0}_{L^2}^2 + \eps^2 \nu^{-1} t^{\frac{1}{p-1}}
 \norm{u_0}_{L^2}^{\frac{2}{p-1}}
 \norm{\nabla b}_{L^\infty_tL^p_x}^{\frac{p}{p-1}}
 \left(\nu\int_0^t \norm{\nabla u_s^{\nu}}^2_{L^2}\di s\right)^{\frac{p-2}{p-1}}.
\end{equation}
By combining \eqref{Z11}, \eqref{zzz2}, \eqref{zzz6}, assuming without loss of generality $\norm{u_0}_{W^{1,2}}+\norm{u_0}_{L^{\infty}}\le 1$, and choosing $\eps=\nu$ one gets
\begin{equation}\label{zzz7}
		\norm{u^0_t\ast\rho_{\nu}-u^0_t}_{L^2}^2
		\lesssim_p 
		 \nu ( t + 1 ) + t^{\frac{p}{p-1}}
		\norm{\nabla b}_{L^\infty_tL^p_x}^{\frac{p}{p-1}}
		\left(\nu\int_0^t \norm{\nabla u_s^{\nu}}^2_{L^2}\di s\right)^{\frac{p-2}{p-1}}
		\quad\text{for every $\nu\in (0,1)$}.
\end{equation}
Thanks to \eqref{zzz9} there exists $\nu_0\in (0,1)$ such that  $\nu \int_0^t\norm{\nabla u_s}_{L^2}^2\di s\le C\log(1/\nu)^{-q}$ for any $\nu\in (0,\nu_0)$, hence
\begin{equation}\label{zzz4}
\norm{u^0_t\ast\rho_{\nu}-u^0_t}_{L^2}^2\lesssim_{t,p} \log(1/\nu) ^{-q\frac{p-2}{p-1}}
\quad\text{for any $0<\nu<\nu_0$}.
\end{equation}
We claim that \eqref{zzz4} implies $u^0_t\in H^{\log,r}$ for every $0<r < q\frac{p-2}{p-1}$.
To this end we note that
\begin{align*}
\sum_{k\in\mathbb{Z}} | \widehat{u^0_t} ( k ) |^2  \int_0^{\nu_0} \frac{ |\hat\rho( \nu k )-1 |^2 }{\log( 1/\nu )^{1-r}}  \frac{\di \nu}{\nu}
=
\int_0^{\nu_0}\int_{\TT^d} | u^0_t \ast \rho_{\nu} - u^0_t |^2\frac{1}{\log(1/\nu)^{1-r}} \di x \frac{\di \nu}{\nu}
\lesssim \frac{1}{ q\frac{p-2}{p-1}-r }
\end{align*}
for any $0< r<q\frac{p-2}{p-1}$, where $\hat{\rho}$ denotes the Fourier transform of $\rho$ in $\setR^d$. Moreover it is not hard to check that 
\begin{equation}
C_{\nu_0}+\int_{0}^{\nu_0}|\hat\rho(\nu k )-1|^2\frac{1}{\log(1/\nu)^{1-r}} \frac{d\nu}{\nu}\gtrsim_{\nu_0,d} \log(2+| k |)^{r},
\end{equation}
Thus \eqref{eq:logSobolevvsFourier} yields
\begin{align*}
\norm{u^0_t}_{H^{\log,r}}^2\lesssim_{r,d}
 \sum_{k\in \mathbb{Z}^d} \log(2+|k|)^r |\widehat{u^0_t}(k)|^2 <\infty.
\end{align*}
The proof is over.

\begin{remark}\label{remark:Sobolev}
	Under the assumption \eqref{zzz10}, the estimate \eqref{zzz7} gives
	\begin{equation*}
	\norm{u^0_t\ast\rho_{\nu}-u^0_t}_{L^2}^2\lesssim_{t,p} \nu ^{\theta\frac{p-2}{p-1}}
	\quad\text{for any $0<\nu<\nu_0$},
	\end{equation*}
	for some $\nu_0>0$. Hence $u^0_t\in H^{r}(\mathbb{T}^d)$ for any $0<r<\theta\frac{p-2}{2(p-1)}$.
\end{remark}

%

\section{Logarithmic estimate on the dissipation rate and consequences}
\label{sec:logarithmicestimateonthedissipationrate}
In this section we prove \autoref{prop: dissipation estimate} and we draw a series of consequences.

\subsection{Proof of \autoref{prop: dissipation estimate}: logarithmic bound on the dissipation rate}
    Since \eqref{CE} is linear we can assume without loss of generality that
    \begin{equation*}
    	\norm{u_0}_{W^{1,2}}+\norm{u_0}_{L^2}\le 1.
    \end{equation*}
    Observe that, from \eqref{eq:logSobolevvsFourier}, we deduce
    \begin{equation*}
      [u_0]_{H^{\log,p}}^2\lesssim_{d,p} \sum_{k\in \mathbb{Z}^d}\log(2+|k|)^p|\hat u_0(k)|^2
      \lesssim \sum_{k\in \mathbb{Z}^d} (1+|k|^2)|\hat u_0(k)|^2
      \le \norm{u_0}_{W^{1,2}}^2  \le 1.
    \end{equation*}
    We apply \eqref{prop: laplacianbound} with $\gamma=\infty$ and $\beta=\frac{p-2}{p-1}$ obtaining
	\begin{align}\nonumber
	\nu \norm{\nabla u_t}_{L^2}^2+&\nu^2\int_0^t \norm{\Delta u^\nu_s}_{L^2}^2\di s\\ 
	\le& \nu \norm{\nabla u_0}_{L^2}^2+C_p\norm{u_0}_{L^{\infty}}^{\frac{2}{p-1}}\norm{\nabla b}_{L^\infty_tL^p_x}^{\frac{p}{p-1}}t^{\frac{1}{p-1}}\left(\nu\int_0^t \norm{\nabla u_s^{\nu}}^2_{L^2}\di s\right)^{\frac{p-2}{p-1}}\nonumber\\
	\le & \nu +C_{p,d}\norm{\nabla b}_{L^\infty_tL^p_x}^{\frac{p}{p-1}}t^{\frac{1}{p-1}}\left(\nu\int_0^t \norm{\nabla u_s^{\nu}}^2_{L^2}\di s\right)^{\frac{p-2}{p-1}}. 	\label{z12}	
	\end{align}
	Let us now set
	\begin{equation}
	D_{\nu}(t):=\nu\int_0^t \norm{\nabla u_s^{\nu}}^2_{L^2}\di s,
	\end{equation}
	and fix  $\lambda>0$. Exploiting \eqref{eq:logSobolevvsFourier} and \eqref{z12} we obtain
	\begin{align*}
	D_{\nu}(t) & 
	=\nu \int_0^t \sum_{k\in \mathbb{Z}^d} |k|^2|\widehat{u^\nu_t}(k)|^2\\
	&\lesssim  \frac{\nu\lambda^2}{\log(\lambda+2)^p}\int_0^t\sum_{|k|<\lambda} \log (2+|k|)^p|\widehat {u^\nu_s}(k)|^2 \di s +\frac{\nu}{\lambda^2} \int_0^t \norm{\Delta u^\nu_s}_{L^2}^2\di s\\
	&\le \frac{\nu\lambda^2 }{\log(\lambda+2)^p}\int_{0}^{t}\norm{u_s}_{H^{\log,p}}^2\di s
	+\frac{1}{\lambda^2}+\frac{1}{\nu\lambda^2}\norm{\nabla b}_{L^\infty_tL^p_x}^{\frac{p}{p-1}}t^{\frac{1}{p-1}}D_{\nu}(t)^{\frac{p-2}{p-1}}
	\end{align*}
	for any $t\in (0,T)$.
	By means of the Young inequality we can estimate
	\begin{equation*}
	\frac{1}{\nu\lambda^2}\norm{\nabla b}_{L^\infty_tL^p_x}^{\frac{p}{p-1}}t^{\frac{1}{p-1}}D_{\nu}(t)^{\frac{p-2}{p-1}}
	\le \frac{C_p}{\nu^{p-1}\lambda^{2(p-1)}}\norm{\nabla b}_{L^\infty_tL^p_x}^{p}t
	+\frac{1}{2} D_{\nu}(t),
	\end{equation*}
	while \autoref{cor:regularity} gives
	\begin{equation*}
	\int_{0}^{t}\norm{u_s}_{H^{\log,p}}^2\di s 
	\lesssim_{p,d} t^{p+1}\norm{\nabla b}_{L^{\infty}_tL^p_x}^p+t.
	\end{equation*}
	Putting all together we end up with 
	\begin{align}
	\label{z13}
	D_{\nu}(t) \lesssim_{p,d}  \frac{\nu\lambda^2t}{\log(\lambda+2)^p}\left(t^p\norm{\nabla b}_{L^{\infty}_tL^p_x}^p+1\right)
	+\frac{1}{\lambda^2} 
	+\frac{1}{\nu^{p-1}\lambda^{2(p-1)}}t\norm{\nabla b}_{L^\infty_tL^p_x}^{p}.
	\end{align}
	Choosing
	\begin{equation}
	\lambda=(\nu t)^{-\frac{1}{2}} \log\left(\frac{1}{\nu t}+e\right)^{1/2},
	\end{equation}
	and using the elementary inequality
	\begin{equation*}
	\frac{\log\left( \frac{1}{\nu t}+e \right)}{\log\left( \frac{\log((\nu t)^{-1}+e)^{1/2}}{\sqrt{\nu t}}+2 \right)^p}
	\le
	\frac{\log\left( \frac{1}{\nu t}+e \right) }{\log\left( \frac{1}{\sqrt{\nu t}}+2 \right)^p}
	\le 2^p\frac{1}{\log\left( \frac{1}{\nu t}+4 \right)^{p-1}},
	\end{equation*}
	one gets \eqref{eq:dissipation}.    

\subsection{Lower bound on $L^2$ norms}
\label{subsection:lowerboundonL2norm}
Let us now present two consequences of \autoref{prop: dissipation estimate}.

The first conclusion is \autoref{prop:dissipation time scale} below. It provides an upper bound on the enhanced diffusion rate $r(\nu)$, we refer to the introduction of this not for a detailed discussion.

\begin{proposition}
	Let $b\in L^{\infty}([0,+\infty),W^{1,p}(\TT^d,\setR^d))$ be a divergence free vector field for some $p>2$. Given $u_0\in W^{1,2}(\TT^d)\cap L^{\infty}$, if there exists $r:(0,\nu_0)\to (0,+\infty)$ for some $0<\nu_0<1$, which satisfies
	\begin{equation}
	\norm{u^{\nu}_t}_{L^2}^2\le e^{-r(\nu)t}\norm{u_0}_{L^2}^2
	\quad\text{for any $t>1/\nu_0$ and $\nu\in (0,\nu_0)$,}	
	\end{equation}
	then
	\begin{equation}
	\limsup_{\nu\downarrow 0}\frac{ r(\nu) }{\log(1/\nu)^{-\frac{p-1}{p}}}\le C,
	\end{equation}
	where $C=C(p,d,\norm{u_0}_{W^{1,2}},\norm{u_0}_{L^{\infty}}, \norm{\nabla b}_{L^{\infty}_tL^p_x})$.
\end{proposition}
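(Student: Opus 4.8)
The plan is to feed the energy balance \eqref{eq:energybalance} into the dissipation estimate of \autoref{prop: dissipation estimate} and then read off an upper bound for $r(\nu)$ by testing the hypothesis at a single, carefully chosen time scale. First I would record the master inequality obtained by chaining the hypothesis, the energy balance, and \eqref{eq:dissipation}: for every admissible $t>1/\nu_0$ and $\nu\in(0,\nu_0)$,
\begin{equation*}
\norm{u_0}_{L^2}^2\bigl(1-e^{-r(\nu)t}\bigr)\le \norm{u_0}_{L^2}^2-\norm{u^\nu_t}_{L^2}^2=2\nu\int_0^t\norm{\nabla u^\nu_s}_{L^2}^2\di s\le C\left[\nu t+\frac{t^p\norm{\nabla b}_{L^\infty_tL^p_x}^p+1}{\log\bigl(\tfrac{1}{\nu t}+2\bigr)^{p-1}}\right],
\end{equation*}
where the first inequality is the hypothesis $\norm{u^\nu_t}_{L^2}^2\le e^{-r(\nu)t}\norm{u_0}_{L^2}^2$, the middle equality is \eqref{eq:energybalance}, and $C$ absorbs $\norm{u_0}_{W^{1,2}}^2+\norm{u_0}_{L^\infty}^2$ together with numerical factors. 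The whole game is then to choose $t=t_\nu$ so that the left-hand side stays bounded below by a fixed positive constant while the bracket on the right is forced to vanish as $\nu\downarrow 0$.

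I would argue by contradiction. Suppose the conclusion fails, so that along a sequence $\nu_j\downarrow 0$ one has $r(\nu_j)\gg\log(1/\nu_j)^{-(p-1)/p}$, equivalently $t_j:=1/r(\nu_j)=o\bigl(\log(1/\nu_j)^{(p-1)/p}\bigr)$. Testing the master inequality at $t=t_j$ makes the decay factor a fixed constant, $1-e^{-r(\nu_j)t_j}=1-e^{-1}$, so the left-hand side is at least $(1-e^{-1})\norm{u_0}_{L^2}^2>0$. On the right-hand side, since $t_j$ is polylogarithmic in $1/\nu_j$ we have $\nu_j t_j\to 0$ and hence $\log\bigl(\tfrac{1}{\nu_j t_j}+2\bigr)\sim\log(1/\nu_j)$, while $t_j^p\norm{\nabla b}_{L^\infty_tL^p_x}^p=o\bigl(\log(1/\nu_j)^{p-1}\bigr)$; the whole bracket is therefore $o(1)$. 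This contradicts the positive lower bound, giving the claim, and tracking where the balance $t^p\norm{\nabla b}^p\approx\log(1/\nu)^{p-1}$ breaks down produces the explicit finite value of the $\limsup$ in terms of $\norm{\nabla b}_{L^\infty_tL^p_x}$.

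Two points need care, and the second is the only genuinely delicate one. First, the choice $t_j=1/r(\nu_j)$ is admissible only when $r(\nu_j)<\nu_0$, i.e.\ $t_j>1/\nu_0$; should $r(\nu_j)$ fail to tend to zero and stay $\ge\nu_0$ along a subsequence, I would instead test the master inequality at the \emph{fixed} time $t=2/\nu_0$, where the left-hand side is still bounded below (by $(1-e^{-2})\norm{u_0}_{L^2}^2$) and the right-hand side is $O\bigl(\log(1/\nu_j)^{-(p-1)}\bigr)\to 0$, yielding the same contradiction; this also confirms $r(\nu)\to0$. Second, and this is the crux, is the control of the logarithm in the denominator: the approximation $\log(\tfrac{1}{\nu t}+2)\sim\log(1/\nu)$ holds precisely because $t_\nu$ grows only polylogarithmically in $1/\nu$, so that $\log t_\nu=o(\log(1/\nu))$. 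This is exactly the regime forced by the contradiction hypothesis; in the complementary regime where $t_\nu\gtrsim 1/\nu$ the term $\nu t_\nu$ alone would already be bounded below, giving directly the much stronger bound $r(\nu)\lesssim\nu$, so that case causes no trouble. Beyond this bookkeeping the proposition is essentially a corollary of \autoref{prop: dissipation estimate}, the substantive work having been done there.
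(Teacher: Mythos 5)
Your proposal is correct and is essentially the paper's own argument: both proofs chain the hypothesis \eqref{z16} with the energy balance \eqref{eq:energybalance} and the dissipation estimate \eqref{eq:dissipation} at the critical time scale $t\sim\log(1/\nu)^{\frac{p-1}{p}}$. The paper just runs this directly rather than by contradiction---setting $t_\nu=\alpha\log(1/\nu)^{\frac{p-1}{p}}$ (so admissibility $t_\nu>1/\nu_0$ is automatic for small $\nu$, which avoids your case analysis for $r(\nu)\ge\nu_0$) and tuning $\alpha\sim\norm{\nabla b}_{L^{\infty}_tL^p_x}^{-1}$ so that the right-hand side of your master inequality stays below $\tfrac12\norm{u_0}_{L^2}^2$, which yields $r(\nu)\le(\log 2+o(1))\,\alpha^{-1}\log(1/\nu)^{-\frac{p-1}{p}}$ and is precisely the ``tracking'' you defer in order to obtain the explicit constant $C$.
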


\begin{proof}
	Let $C=C(p,d)$ as in the statement of \autoref{prop: dissipation estimate} and set
	\begin{equation*}
	K:= C(\norm{u_0}_{W^{1,2}}^2 +\norm{u_0}_{L^{\infty}}^2).
	\end{equation*}
	Fix $\alpha>0$ to be chosen later. Given $\nu>0$ small enough we set $t:=\alpha\log(1/\nu)^{\frac{p-1}{p}}>1/\nu_0$. From \eqref{eq:energybalance} and \autoref{prop: dissipation estimate} we get
	\begin{align*}
	\norm{u_t}_{L^2}^2=& \norm{u_0}_{L^2}^2-\nu \int_0^t \norm{\nabla u_s}_{L^2}^2\di s\\
	\ge& \norm{u_0}_{L^2}^2\left[ 1- \frac{K}{\norm{u_0}_{L^2}^2}\left(\nu t+ \frac{t^p\norm{\nabla b}_{L^{\infty}_tL^p_x}^p+1}{ \log\left( \frac{1}{\nu t}+2 \right)^{p-1}}\right) \right]\\
	=& \norm{u_0}_{L^2}^2\left[ 1-\frac{K}{\norm{u_0}_{L^2}^2}
	\left( \alpha\nu\log(1/\nu)^{\frac{p-1}{p}}
	+ 
	\alpha^p \frac{\log(1/\nu)^{p-1} \norm{\nabla b}_{L^{\infty}_tL^p_x}^p +1}{\log\left( \frac{1}{\alpha \nu }\log(1/\nu)^{-\frac{p-1}{p}}+2 \right)^{p-1}}
	\right)
	\right]\\
	=& \norm{u_0}_{L^2}^2\left(1-\frac{K \alpha^p \norm{\nabla b}_{L^{\infty}_tL^p_x}^p }{\norm{u_0}_{L^2}^2} + o(1)\right) 
	\end{align*}
	where $o(1)\to 0$ for $\nu\to 0$, and $K$ is as in \autoref{prop: dissipation estimate}. We deduce
	\begin{equation*}
	\liminf_{\nu\downarrow 0}\,  \exp\left\lbrace-\alpha\frac{ r(\nu)}{\log(1/\nu)^{-\frac{p-1}{p}}}\right\rbrace 
	\ge 1-\frac{K \alpha^p \norm{\nabla b}_{L^{\infty}_tL^p_x}^p }{\norm{u_0}_{L^2}^2},
	\end{equation*}
	and choosing $\alpha$ such that 
	\[\frac{K \alpha^p \norm{\nabla b}_{L^{\infty}_tL^p_x}^p }{\norm{u_0}_{L^2}^2}=\frac{1}{2},
	\]
	 we easily get \eqref{z14}.
\end{proof}

A second consequence of \autoref{prop: dissipation estimate} is a step toward \autoref{Conjecture:open}.

\begin{proposition}\label{prop:tripleexponential}
	Let $b\in L^{\infty}([0,+\infty),W^{1,p}(\TT^d,\setR^d)\cap L^{\infty})$ be a divergence free vector field for some $p>2$. Let $u^{\nu}$ solve \eqref{CE} with $u_0\in W^{1,2}(\TT^d)\cap L^{\infty}$. Then, for any $\alpha\in [0,p-1)$ there exist $\nu_0=\nu_0(u_0, \norm{b}_{L^\infty_tW^{1,p}_x\cap L^\infty},\alpha, p,d)\in (0,1)$ and $C=C(u_0, \norm{b}_{L^\infty_tW^{1,p}_x\cap L^\infty},p,d)>0$ such that
	\begin{equation}\label{eq:tripleexp}
	\norm{u^{\nu}_t}_{L^2}^2
	\ge \norm{ u_0 }_{ L^2 }^2  \exp\left\lbrace-\log(1/\nu)^{-\alpha } \exp\left\lbrace   e^{C t^{\frac{p}{p-1-\alpha}}}  \right\rbrace \right\rbrace,
	\end{equation}
	for every $1<t<+\infty$ and $\nu\in (0,\nu_0)$.	
\end{proposition}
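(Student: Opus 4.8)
The plan is to derive a lower bound on $\norm{u^\nu_t}_{L^2}^2$ by running the energy balance \eqref{eq:energybalance} forward in time and controlling the accumulated dissipation $D_\nu(t)=\nu\int_0^t\norm{\nabla u^\nu_s}_{L^2}^2\di s$ via \autoref{prop: dissipation estimate}. The subtlety compared to \autoref{prop:dissipation time scale} is that here we want a bound valid for \emph{all} $t>1$, so the $t$-dependence of the dissipation estimate must be tracked carefully rather than absorbed into a constant. First I would recall from \eqref{eq:dissipation} that, assuming without loss of generality $\norm{u_0}_{W^{1,2}}^2+\norm{u_0}_{L^\infty}^2\le 1$,
\[
	D_\nu(t)\lesssim_{p,d} \nu t+\frac{t^p\norm{\nabla b}_{L^\infty_tL^p_x}^p+1}{\log(1/(\nu t)+2)^{p-1}}.
\]
Since we are after an exponential-type lower bound, the natural route is to differentiate: using $\frac{\di}{\di t}\norm{u^\nu_t}_{L^2}^2=-2\nu\norm{\nabla u^\nu_t}_{L^2}^2$, a Gr\"onwall/logarithmic-derivative argument would give $\norm{u^\nu_t}_{L^2}^2\ge\norm{u_0}_{L^2}^2\exp\{-\int_0^t 2\nu\norm{\nabla u^\nu_s}_{L^2}^2/\norm{u^\nu_s}_{L^2}^2\,\di s\}$, so the task reduces to bounding the instantaneous relative dissipation rate on dyadic (or otherwise chosen) time intervals.

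The key step is to split $[0,t]$ into sub-intervals and apply \eqref{eq:dissipation} on each, exploiting the $\log(1/(\nu s)+2)^{-(p-1)}$ gain. On an interval where $s$ stays comparable to some scale $\tau$, the dissipation contributes essentially $\tau^p\norm{\nabla b}_{L^\infty_tL^p_x}^p\log(1/(\nu\tau))^{-(p-1)}$ to $D_\nu$. To convert the exponent $p-1$ in \eqref{eq:dissipation} into the exponent $\alpha<p-1$ appearing in \eqref{eq:tripleexp}, I would choose the time scale $\tau$ as a function of $\nu$ so that $\log(1/(\nu\tau))\sim\log(1/\nu)^{(p-1-\alpha)/(p-1)}$ roughly, which forces $\tau$ to be \emph{doubly exponentially} small in a power of $t$. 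Tracing this through is exactly what produces the nested exponentials $\exp\{e^{Ct^{p/(p-1-\alpha)}}\}$: the outer $\exp$ comes from integrating the relative dissipation rate (Gr\"onwall), while the inner $e^{Ct^{\cdots}}$ encodes how small $\tau$ must be to beat the gap between $\alpha$ and $p-1$. The exponent $\frac{p}{p-1-\alpha}$ should emerge from balancing the algebraic factor $\tau^p$ against the logarithmic gain $\log(1/(\nu\tau))^{-(p-1)}$ at the chosen scale.

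The main obstacle, I expect, is the bookkeeping in this balancing: one must choose the decomposition of $[0,t]$ and the scale $\tau=\tau(\nu,t)$ so that (i) the accumulated $\nu t$ term stays negligible, (ii) on each piece the logarithmic denominator is large enough to degrade $p-1$ down to $\alpha$ only by the prescribed amount, and (iii) the number of pieces, together with the constant $\norm{b}_{L^\infty_tW^{1,p}_x\cap L^\infty}$, enters only through the inner $e^{Ct^{\cdots}}$ and not the outer factor $\log(1/\nu)^{-\alpha}$. The extra hypothesis $b\in L^\infty$ (beyond $W^{1,p}$) presumably enters to control the $L^\infty$ norm $\norm{u^\nu_s}_{L^\infty}\le\norm{u_0}_{L^\infty}$ uniformly, keeping the relative rate $\norm{\nabla u^\nu_s}_{L^2}^2/\norm{u^\nu_s}_{L^2}^2$ under control when $\norm{u^\nu_s}_{L^2}^2$ itself may be shrinking. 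Finally, $\nu_0$ must be taken small (depending on $\alpha$ and $t$) precisely to guarantee that the logarithmic gain is in the regime where the balancing is valid and the $o(1)$/error terms are dominated.
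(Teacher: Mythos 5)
Your scheme captures the small-time regime but has a genuine gap for large $t$, and in the regime where it does work it is more complicated than necessary. For $t$ up to a critical time $t_\nu \sim \log(1/\nu)^{\frac{p-1-\alpha}{p}}$ no Gr\"onwall argument or dyadic decomposition is needed: the paper simply chooses $t_\nu$ so that the right-hand side of \eqref{eq:dissipation} is at most $\tfrac12 \norm{u_0}_{L^2}^2\log(1/\nu)^{-\alpha}$, and then the energy balance \eqref{eq:energybalance} together with monotonicity of the accumulated dissipation gives $\norm{u^\nu_t}_{L^2}^2 \ge \norm{u_0}_{L^2}^2\bigl(1-\tfrac12\log(1/\nu)^{-\alpha}+o(1)\bigr) \ge e^{-\log(1/\nu)^{-\alpha}}\norm{u_0}_{L^2}^2$ for all $t\in(0,t_\nu)$, which already implies \eqref{eq:tripleexp} there since the inner double exponential is $\ge 1$.

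The gap is what happens for $t> t_\nu$, which your proposal does not actually resolve. Once $t \gtrsim \log(1/\nu)^{\frac{p-1-\alpha}{p}}$ the right-hand side of \eqref{eq:dissipation} exceeds $\norm{u_0}_{L^2}^2$ and the energy balance yields nothing; your proposed fixes both fail here. Restarting \eqref{eq:dissipation} on sub-intervals $[s_1,s_2]$ requires a bound on $\norm{u^\nu_{s_1}}_{W^{1,2}}$ uniform in $\nu$, which is exactly what is unavailable (one only controls $\nu\norm{\nabla u^\nu_s}_{L^2}^2$, via \autoref{prop: laplacianbound}); and the Gr\"onwall form $\exp\{-\int_0^t 2\nu\norm{\nabla u^\nu_s}_{L^2}^2/\norm{u^\nu_s}_{L^2}^2\di s\}$ requires controlling the relative rate precisely when $\norm{u^\nu_s}_{L^2}$ may have decayed, which cannot be done from $D_\nu$ alone. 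The paper instead switches, for $t>t_\nu$, to the Poon--Miles--Doering estimate (cf. \eqref{eq:Poon}, \cite{MilesDoering18})
\begin{equation*}
\norm{u^\nu_t}_{L^2}^2\ge \norm{u_0}_{L^2}^2\exp\left\lbrace -\nu^2\frac{\norm{\nabla u_0}_{L^2}^2}{\norm{u_0}_{L^2}^2}\left(e^{t\nu^{-1}\norm{b}_{L^{\infty}}}-1\right)\right\rbrace ,
\end{equation*}
and this is where the hypothesis $b\in L^{\infty}$ actually enters --- not, as you suggest, to get $\norm{u^\nu_s}_{L^\infty}\le\norm{u_0}_{L^\infty}$, which is the maximum principle and holds for any divergence-free drift. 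The condition $t\ge t_\nu$ is then inverted to read $\nu^{-1}\le \exp\{Ct^{\frac{p}{p-1-\alpha}}\}$, and substituting this into the double exponential above produces the triple exponential and the exponent $\frac{p}{p-1-\alpha}$ in \eqref{eq:tripleexp}; they do not come from any balancing of $\tau^p$ against logarithmic gains at doubly-exponentially small scales, as your heuristic proposes.
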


\begin{proof}
	Let $C=C(p,d)$ as in the statement of \autoref{prop: dissipation estimate} and define
	\begin{equation*}
	K:= C(\norm{u_0}_{W^{1,2}}^2 +\norm{u_0}_{L^{\infty}}^2).
	\end{equation*}
Set 
\[
t_{\nu}:=\left(\frac{\norm{u_0}_{L^2}^2}{2 K}\right)^{1/p}\frac{1}{\norm{\nabla b}_{L^{\infty}_tL^p_x}} \log(1/\nu)^{\frac{p-1-\alpha}{p}}.
\]
Let us begin by considering the case $0<t\le t_{\nu}$, arguing as in the proof of \autoref{prop:dissipation time scale}, we get
\begin{align*}
\norm{u_t}_{L^2}^2
=& \norm{u_0}_{L^2}^2-\nu \int_0^t \norm{\nabla u_s}_{L^2}^2\di s
\ge \norm{u_0}_{L^2}^2-\nu \int_0^{t_{\nu}}\norm{\nabla u_s}_{L^2}^2\di s\\
\ge& \norm{u_0}_{L^2}^2\left[ 1- \frac{K}{\norm{u_0}_{L^2}^2}\left(\nu t_{\nu}+ \frac{t_{\nu}^p\norm{\nabla b}_{L^{\infty}_tL^p_x}^p+1}{ \log\left( \frac{1}{\nu t_{\nu}}+2 \right)^{p-1}}\right) \right]\\
=& \norm{u_0}_{L^2}^2\left(1-\frac{1}{2}\log(1/\nu)^{-\alpha} + o(1)\right) 
\end{align*}
where $o(1)\to 0$ for $\nu\to 0$. Therefore, we can find
$\nu_0=\nu_0(u_0,b,p,d)\in (0,1)$ such that, for any $\nu \in (0,\nu_0)$ it holds
\begin{equation}\label{z17}
\norm{u^{\nu}_t}_{L^2}^2\ge e^{-\log(1/\nu)^{-\alpha}} \norm{u_0}_{L^2}^2
\quad \text{for every $t\in (0,t_{\nu})$}.
\end{equation}
Observe that \eqref{z17} implies \eqref{eq:tripleexp} for any $t\in (0, t_{\nu})$.

Let us now consider the case $t>t_{\nu}$, for $\nu\in (0,\nu_0)$. From \cite{MilesDoering18} we know that
\begin{equation}
\norm{u^\nu_t}_{L^2}^2\geq 	\norm{u_0}_{L^2}^2\exp\left\lbrace-\nu^2\frac{\norm{\nabla u_0}_{L^2}^2}{\norm{u_0}_{L^2}^2}\left(e^{t\nu^{-1}\norm{b}_{L^\infty}}-1\right)\right\rbrace,
\end{equation}
it is easily seen that, for  $t\ge t_{\nu}$ one has
\begin{equation}
\nu^2\frac{\norm{\nabla u_0}_{L^2}^2}{\norm{u_0}_{L^2}^2}\left(e^{t\nu^{-1}\norm{b}_{L^\infty}}-1\right)
\le 
\frac{\norm{\nabla u_0}_{L^2}^2}{\norm{u_0}_{L^2}^2}
\exp\left\lbrace \norm{b}_{L^{\infty}} e^{Ct^{\frac{p}{p-1-\alpha}}}\right\rbrace,
\end{equation}
where $C=C(u_0,\norm{b}_{L^\infty_tW^{1,p}_x\cap L^\infty},p,d)>0$,
hence \eqref{eq:tripleexp} is satisfied provided $\nu_0>0$ is small enough. 
\end{proof}

\subsection{Vanishing viscosity limit}
    \label{subsection:vanishingviscositylimit}
    Another interesting consequence of \autoref{prop: dissipation estimate} regards the vanishing viscosity limit $\nu\to 0$. More precisely we aim at estimating the $L^2$ distance between $u^{\nu}$ and $u^0$ which, respectively, solve \eqref{CE} and \eqref{Tr}. To this end the key estimate to take into account is
    \begin{equation}\label{eq:laplacian estimate}
	\nu^2 \int_0^t \norm{\Delta u_s}_{L^2}^2\di s
	\le 
	C\left[
	\nu+t \nu^{\frac{p-2}{p-1}}+\frac{t^{p-1}+1}{\log\left( \frac{1}{\nu t}+2 \right)^{p-2}}\right]
	\quad \text{for every $\nu>0$ and $t>0$},
    \end{equation}
    where $C=(1+\norm{\nabla b}_{L^{\infty}_tL^p_x}^p)(\norm{u_0}_{W^{1/2}}^2+\norm{u_0}_{L^{\infty}}^2)$. Notice that \eqref{eq:laplacian estimate} easily follows by combining \eqref{prop: laplacianbound} and \eqref{prop: dissipation estimate}.

    The connection between \eqref{eq:laplacian estimate} and the vanishing viscosity estimate is given by
    \begin{equation}\label{zzz5}
    \sup_{s\in [0,t]}\norm{u_s^\nu-u_s^0}^2_{L^2}\le  t\nu^2 \int_0^t \norm{\Delta u^{\nu}_s}^2\di s,
    \end{equation}
    that comes from
	\begin{align*}
	\frac{\di}{\di t}\norm{u_t^\nu-u_t^0}^2_{L^2}\leq 2\nu\left|\int(u_t^\nu-u_t^0)\Delta u_t^\nu \di x\right|
	\le 2\nu \norm{u^{\nu}_t-u_0}_{L^2}\norm{\Delta u^{\nu}_t}_{L^2},
	\end{align*}
    by applying the H\"older inequality.
What we have proven is \autoref{th:vanishing viscosity} that we state again below for the reader's convenience.

\begin{theorem}
	Let $b\in L^{\infty}([0,+\infty),W^{1,p}(\TT^d,\setR^d))$ be a divergence free vector field for some $p>2$. Given $u_0\in W^{1,2}(\TT^d)\cap L^{\infty}$ we consider $u^0,u^{\nu}$, respectively, solutions to \eqref{CE} and \eqref{Tr}. Then it holds
	\begin{equation*}
    \sup_{s\in [0,t]}\norm{u_s^\nu-u_s^0}^2_{L^2}
    \le 
    Ct\left[
    \nu+t \nu^{\frac{p-2}{p-1}}+\frac{t^{p-1}+1}{\log\left( \frac{1}{\nu t}+2 \right)^{p-2}}\right]
    \quad \text{for every $\nu>0$ and $t>0$},
    \end{equation*}
    where $C=(1+\norm{\nabla b}_{L^{\infty}_tL^p_x}^p)(\norm{u_0}_{W^{1/2}}^2+\norm{u_0}_{L^{\infty}}^2)$.
\end{theorem}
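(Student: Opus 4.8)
The plan is to control the difference $w_s := u^\nu_s - u^0_s$ by a direct energy argument and then to feed in the a priori bound on $\nu^2\int_0^t\norm{\Delta u^\nu_s}_{L^2}^2\di s$. First I would subtract \eqref{Tr} from \eqref{CE} to see that $w$ solves the transport equation with source
\[
\partial_s w + b\cdot\nabla w = \nu\Delta u^\nu, \qquad w_0 = 0.
\]
Multiplying by $w$ and integrating over $\TT^d$, the transport term vanishes because $b$ is divergence free, since $\int_{\TT^d} (b\cdot\nabla w)\,w\di x = -\tfrac12\int_{\TT^d}(\div b)\,w^2\di x = 0$, leaving
\[
\tfrac12\frac{\di}{\di s}\norm{w_s}_{L^2}^2 = \nu\int_{\TT^d} \Delta u^\nu_s\, w_s\di x \le \nu\norm{w_s}_{L^2}\norm{\Delta u^\nu_s}_{L^2}
\]
by Cauchy--Schwarz. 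This is exactly the differential inequality recorded just before the statement.

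Next I would integrate. Writing $\phi(s):=\norm{w_s}_{L^2}$, the inequality gives $\phi'(s)\le \nu\norm{\Delta u^\nu_s}_{L^2}$ on $\{\phi>0\}$, and since $\phi(0)=0$ one more Cauchy--Schwarz, this time in time, yields
\[
\phi(s) \le \nu\int_0^s \norm{\Delta u^\nu_r}_{L^2}\di r \le \nu\, s^{1/2}\Big(\int_0^s\norm{\Delta u^\nu_r}_{L^2}^2\di r\Big)^{1/2}.
\]
Squaring and taking the supremum over $s\in[0,t]$ reproduces \eqref{zzz5}, namely $\sup_{s\in[0,t]}\norm{u^\nu_s-u^0_s}_{L^2}^2 \le t\,\nu^2\int_0^t\norm{\Delta u^\nu_s}_{L^2}^2\di s$.

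The final step is to substitute the Laplacian estimate \eqref{eq:laplacian estimate} into this bound, which immediately produces the claimed right-hand side $Ct\big[\nu + t\nu^{\frac{p-2}{p-1}} + \frac{t^{p-1}+1}{\log(\frac{1}{\nu t}+2)^{p-2}}\big]$. That estimate \eqref{eq:laplacian estimate} is itself obtained by applying \autoref{prop: laplacianbound} with $\gamma=\infty$ and $\beta=\frac{p-2}{p-1}$ to control $\nu^2\int_0^t\norm{\Delta u^\nu_s}_{L^2}^2\di s$ by a power of the energy dissipation $\nu\int_0^t\norm{\nabla u^\nu_s}_{L^2}^2\di s$, and then inserting the logarithmic bound on that quantity from \autoref{prop: dissipation estimate}.

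The main obstacle is not this last assembly, which is essentially two applications of Cauchy--Schwarz, but the two ingredients it rests on: the genuinely delicate part is the logarithmic dissipation estimate of \autoref{prop: dissipation estimate}, whose proof uses the $\nu$-independent propagation of $H^{\log,p}$ regularity (\autoref{cor:regularity}) together with the interpolation of \autoref{prop: laplacianbound}. A secondary technical point that I would need to address is that $u^0$ lives only in $C([0,T],(L^\infty,w^*))$, so the formal testing of the $w$-equation against $w$ must be justified rigorously --- for instance via mollification, or by expanding $\norm{w_s}_{L^2}^2$ and combining the energy balance \eqref{eq:energybalance} for $u^\nu$ with the renormalization property (constancy of $\norm{u^0_s}_{L^2}$) supplied by the DiPerna--Lions theory, handling the cross term $\langle u^\nu_s,u^0_s\rangle$ through the respective weak formulations.
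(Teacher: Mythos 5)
Your argument is correct and follows essentially the same route as the paper: the energy identity for $w = u^\nu - u^0$ with the divergence-free cancellation of the transport term, Cauchy--Schwarz in space and then in time to obtain $\sup_{s\in[0,t]}\norm{u^\nu_s-u^0_s}_{L^2}^2 \le t\,\nu^2\int_0^t\norm{\Delta u^\nu_s}_{L^2}^2\di s$, and finally the Laplacian bound obtained by combining \autoref{prop: laplacianbound} (with $\gamma=\infty$) and \autoref{prop: dissipation estimate}. Your closing remark on rigorously justifying the formal testing (via mollification or the DiPerna--Lions renormalization property) addresses a point the paper leaves implicit, and is a welcome addition.
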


Relying on ideas developed in \autoref{subsection:proofofslowlydissipating} we are able to prove 
that the bound 
\[
 \sup_{s\in [0,t]}\norm{u_s^\nu-u_s^0}^2_{L^2} \le O( \log(1/\nu)^{2-p})
 \quad\text{for $\nu\to 0$}
\] 
is almost optimal. More precisely, we show that for any $C>0$ one can find $b\in L^1([0,T],W^{1,p}(\TT^d,\setR^d))$ and $u_0\in W^{1,2}(\setR^d)\cap L^{\infty}$ such that, for every $r>p$ it holds 
\begin{equation*}
\limsup_{\nu\downarrow 0}\, \log(1/\nu)^r \norm{u_t^{\nu}-u^0_t}_{L^2}^2=\infty.
\end{equation*}
This easily follows from \autoref{prop:counterexample} below and the example in \autoref{prop:oldcounterexample}.

\begin{proposition}\label{prop:counterexample}
	Fix $u_0\in W^{1,2}(\TT^d)\cap L^{\infty}$.
	Let $u_t^{\nu}$ and $u_t^0$ solve, respectively, \eqref{CE} \eqref{Tr} with $b\in L^1([0,T],W^{1,p}(\TT^d,\setR^d))$, for some $p>2$.
	If there exist $t\in (0,T)$, $\nu_0\in (0,1)$, $C>0$ and $r>0$ such that 
	\begin{equation}\label{Z9}
	\norm{u^\nu_t-u^0_t}^2_{L^2} \le C\log(1/\nu)^{-r},
	\quad
	\text{for every } 0<\nu <\nu_0,
	\end{equation}
	then
	\begin{equation*}
     u_t^{0}\in H^{\log, r_1}
	\quad \text{for any $0<r_1<r$}.
	\end{equation*}
\end{proposition}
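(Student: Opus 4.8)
The plan is to mimic closely the argument in \autoref{subsection:proofofslowlydissipating}, but to exploit the stronger hypothesis \eqref{Z9}, which controls the vanishing viscosity error directly rather than the energy dissipation rate. First I would introduce the mollification $u^0_t\ast\rho_\nu$ and decompose, exactly as in \eqref{Z11},
\begin{equation*}
\norm{u^0_t\ast\rho_\nu-u^0_t}_{L^2}
\lesssim \norm{u^\nu_t-u^0_t}_{L^2}+\nu\norm{\nabla u^\nu_t}_{L^2},
\end{equation*}
using that $\norm{u^\nu_t\ast\rho_\nu-u^\nu_t}_{L^2}\lesssim \nu\norm{\nabla u^\nu_t}_{L^2}$ (the choice $\eps=\nu$), together with the elementary contraction $\norm{u^0_t\ast\rho_\nu-u^\nu_t\ast\rho_\nu}_{L^2}\le\norm{u^0_t-u^\nu_t}_{L^2}$.

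The first term on the right is exactly what \eqref{Z9} controls, giving $\norm{u^\nu_t-u^0_t}_{L^2}^2\le C\log(1/\nu)^{-r}$. For the second term I would use \autoref{prop: laplacianbound} with $\gamma=\infty$ together with the energy balance \eqref{eq:energybalance}, precisely as in the derivation of \eqref{zzz2}, to bound $\nu^2\norm{\nabla u^\nu_t}_{L^2}^2$ by a quantity involving $\nu\int_0^t\norm{\nabla u^\nu_s}_{L^2}^2\di s\le\norm{u_0}_{L^2}^2$. This yields $\nu^2\norm{\nabla u^\nu_t}_{L^2}^2\lesssim_{t,p}\nu^{\frac{p-2}{p-1}}$, which decays polynomially in $\nu$ and is therefore negligible compared to the logarithmic rate $\log(1/\nu)^{-r}$. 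Combining the two estimates gives
\begin{equation*}
\norm{u^0_t\ast\rho_\nu-u^0_t}_{L^2}^2\lesssim_{t,p}\log(1/\nu)^{-r}
\quad\text{for all }0<\nu<\nu_0.
\end{equation*}

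Finally I would convert this mollification estimate into $H^{\log,r_1}$ membership by the same Fourier-side computation that closes \autoref{thm:slowly dissipating solutions}: integrating $\norm{u^0_t\ast\rho_\nu-u^0_t}_{L^2}^2\log(1/\nu)^{r_1-1}\,\nu^{-1}\di\nu$ over $(0,\nu_0)$ and using Parseval, one gets for every $0<r_1<r$ a finite bound, while the lower estimate $C_{\nu_0}+\int_0^{\nu_0}|\hat\rho(\nu k)-1|^2\log(1/\nu)^{r_1-1}\,\nu^{-1}\di\nu\gtrsim\log(2+|k|)^{r_1}$ together with the characterisation \eqref{eq:logSobolevvsFourier} yields $\norm{u^0_t}_{H^{\log,r_1}}^2<\infty$. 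The main technical point—and the only place where care is needed—is verifying that the gradient term genuinely contributes a \emph{polynomial} power of $\nu$, so that it is dominated by the logarithmic rate from \eqref{Z9}; this is what makes the exponent $r_1$ approach $r$ rather than being degraded by a factor like $\frac{p-2}{p-1}$ as happens in \autoref{thm:slowly dissipating solutions}. Since the hypothesis here bounds the error directly, no such loss occurs, and the cleaner exponent $r_1<r$ results.
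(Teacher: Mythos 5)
Your proposal is correct and follows essentially the same route as the paper's own proof: the decomposition \eqref{Z11} with $\eps=\nu$, the hypothesis \eqref{Z9} for the viscosity error, \autoref{prop: laplacianbound} combined with the energy balance for the gradient term, and the Fourier-side argument of \autoref{subsection:proofofslowlydissipating} to convert the mollification estimate into $H^{\log,r_1}$ membership. The only minor discrepancy is that the gradient contribution actually comes out as $O_t(\nu)$ rather than $O_t(\nu^{\frac{p-2}{p-1}})$, but since both are polynomial in $\nu$ and hence dominated by $\log(1/\nu)^{-r}$, this does not affect the argument.
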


\begin{proof}
	We can assume without loss of generality that 
	$$\norm{\nabla b}_{L^{\infty}_tL^{p}_x}+\norm{u_0}_{W^{1,2}}+\norm{u_0}_{L^{\infty}}\le 1.
	$$
	Fix $\nu\in (0,\nu_0)$ and $\eps\in (0,1)$. By \eqref{Z11} and our assumptions we have
	\begin{align*}
	\norm{u^0_t\ast\rho_{\eps}-u^0_t}_{L^2}
	&\le 2\norm{u_t^{\nu}-u_t^0}_{L^2}+\norm{u_t^{\nu}\ast\rho_{\eps}-u_t^{\nu}}_{L^2}\\
	&\lesssim  C\log(1/\nu)^{-r/2}+
	\eps \norm{\nabla u_t^{\nu}}_{L^2},
	\end{align*}
	that along with \autoref{prop: laplacianbound}, gives
	\begin{equation}\label{Z15}
		\norm{u^0_t\ast\rho_{\eps}-u^0_t}_{L^2}
		\lesssim_{p,d,\gamma} C\log(1/\nu)^{-r/2}+\eps\nu^{-1/2} t^{(1-\beta)/2}.
	\end{equation}
	In particular, choosing $\eps=\nu$, there exists $C'=C'(t,C,p,d,\gamma)$ such that 
	\begin{equation}\label{zzz8}
	\norm{u^0_t\ast\rho_{\nu}-u^0_t}_{L^2} 
	\le C' \log(1/\nu)^{-r/2}
	\quad \text{for every }0<\nu<\nu_0.
	\end{equation}
	As we have already shown in \autoref{subsection:proofofslowlydissipating}, the inequality \eqref{zzz8} implies $u^{\nu}_t\in H^{\log,r_1}$ for any $0\le r_1<r$.
\end{proof}

\end{document}